\documentclass[12pt]{article}
\usepackage{preprint-layout-light} \usepackage{preprint-notation} \usepackage{doi}
\usepackage{booktabs}
\usepackage{needspace}
\usepackage{placeins}
\usepackage{hyperref}
\hypersetup{hidelinks}

\title{CutIGA with Least Squares Stabilized Nitsche Boundary Conditions:\\ The Biharmonic Problem}
\date{\today}

\author{
Mats G. Larson and
Karl Larsson
}
\date{}

\begin{document}

\maketitle

\begin{abstract}
We propose a symmetric Nitsche method for the biharmonic Dirichlet problem on a smooth domain whose boundary may cut arbitrarily through the computational mesh. Bulk least-squares stabilization in a boundary strip and tangential boundary penalties control the consistency terms through an explicit lifting of the two Dirichlet traces. This gives coercivity on $H^4(\Omega)$ without inverse inequalities and permits a fixed penalty independent of the cut configuration for sufficiently thin strips. Under suitable regularity assumptions, splines of degree $p\geq4$ give energy and $L^2$ error bounds of orders $h^{p-1}$ and $h^{p+1}$. We also analyze a face-stabilized extension to $C^1$ splines of degree $p=2$ and $C^2$ splines of degree $p=3$. Exact-ellipse experiments examine convergence, cut stability, and parameter sensitivity.
\end{abstract}

\section{Introduction}
Robust weak enforcement of boundary conditions is a central issue in unfitted discretizations of higher-order partial differential equations, including models of plates, shells, and thin films. Several boundary traces must be imposed simultaneously, and small intersections between the physical domain and background elements make penalty selection difficult.

\paragraph{Model Problem.}
On a smooth bounded domain $\Omega\subset\IR^d$, we consider
\begin{subequations}\label{eq:model-prob}
\begin{alignat}{2}
\Delta^2u&=f &\qquad&\text{in }\Omega\label{eq:model-prob-a}\\
u&=g_0 &&\text{on }\partial\Omega\label{eq:model-prob-b}\\
\partial_nu&=g_1 &&\text{on }\partial\Omega\label{eq:model-prob-c}
\end{alignat}
\end{subequations}
where $n$ is the outward unit normal. The energy space is $H^2(\Omega)$, and both the value and normal derivative are essential boundary conditions. The corresponding weak form uses the bilinear form $(\Delta u,\Delta v)_\Omega$ with both homogeneous traces imposed on the test functions.

\paragraph{Cut Isogeometric Analysis.}
Computer-aided design (CAD) commonly describes geometry by patches based on non-uniform rational B-splines. Isogeometric analysis uses these representations in finite element analysis and provides approximation spaces with high continuity \cite{CoHu09}. In trimmed geometries, the boundary can cross the computational grid independently of the knot lines. Nitsche's method \cite{nitsche1971} imposes the essential conditions weakly while preserving consistency.

\paragraph{Penalty Selection.}
Insufficient penalties in symmetric Nitsche's method can cause loss of coercivity and accuracy; excessive penalties can cause locking when the discrete space cannot reproduce the prescribed traces, particularly on curved boundaries. Without additional stabilization, arbitrarily small physical portions of cut elements lead to cut-dependent stability requirements. Local generalized eigenvalue problems provide sufficient penalties for second- and fourth-order spline discretizations \cite{EmDo10}, but unfavorable cuts can force large values and impair accuracy \cite{PrLeMa17}.

Ghost-penalty stabilization permits cut-independent boundary penalties, although their sufficient size generally depends on the stabilization weight \cite{BuCl15}. Other approaches include minimal stabilization by local reconstruction of boundary fluxes \cite{BuPuVa20} and penalty-free nonsymmetric Nitsche methods \cite{Bu12}, including for Kirchhoff plates \cite{ScHaHs16}. The latter avoid a boundary penalty by giving up symmetry. Higher-order Dirichlet problems also require balancing different traces: for the biharmonic problem, the boundary value and normal derivative carry different mesh scalings.

\paragraph{Earlier Work.}
Nitsche's formulation is used in discontinuous Galerkin methods \cite{baker1977,wheeler1978,arnold1982}, domain decomposition \cite{BeHaSt2003}, unfitted interface problems \cite{HaHa2002}, and fictitious domain methods \cite{BuCl15}. Least-squares stabilized Nitsche boundary conditions for the Poisson problem were proposed and analyzed in \cite{ElfLarLar19} and subsequently applied to the finite cell method in \cite{LarKolRanLar22}.

For fourth-order problems, \cite{BuHaLa20} combines $C^1$ Bogner--Fox--Schmit elements with face stabilization in an analysis focused on simply supported plates. The discrete-extension framework of \cite{BuHaLa22} includes the clamped biharmonic problem with $C^1$ Hermite splines. The present method instead controls the consistency terms on the full space $H^4(\Omega)$ through bulk and tangential least-squares terms. For the $C^3$ or smoother splines considered in the main analysis, assembly uses only cell and physical-boundary integrals.

\paragraph{Contribution.}
Passing from the Poisson method in \cite{ElfLarLar19} to the biharmonic problem requires simultaneous control of two Dirichlet traces and their different derivative scalings. We construct an explicit Hermite lifting of the boundary value and normal derivative, supported in an interior strip, which converts both consistency terms into volume pairings. Bounds on this lifting and its Laplacian determine sufficient relative weights for the bulk stabilization and all six boundary penalties. The resulting method, called LS Nitsche below, is coercive without inverse inequalities and has energy and $L^2$ error orders $p-1$ and $p+1$ for splines of degree $p\geq4$. The method admits fixed, cut-independent penalties for each choice of the bulk stabilization parameter; the strip must nevertheless be sufficiently thin relative to the boundary geometry, so the guarantee includes a geometry-dependent upper bound on the mesh size when the strip width is proportional to the mesh size. We also analyze LS Nitsche for maximum-continuity splines of degrees $p=2,3$, with one face stabilization term for each degree. Numerical experiments test convergence, stability under changing cuts and refinement, and the accuracy consequences of varying the stabilization parameters.

\paragraph{Scope.}
The analysis covers dimensions $d=2,3$ on smooth exact domains, with sufficiently thin strips and the solution regularity stated in Section~\ref{section:error-est}. It uses exact integration and exact normal and surface differential operators; geometry approximation and quadrature errors are outside these estimates. The numerical evidence concerns quartic and quadratic splines on a two-dimensional exact ellipse. The three-dimensional results and the $C^2$ cubic spline extension are established analytically.

\paragraph{Outline.}
Section~\ref{section:method} defines the method. Section~\ref{section:error-est} gives the analysis and the lower-continuity extension. Section~\ref{section:numerics} presents numerical results, and Section~\ref{section:conclusions} gives conclusions and an outlook. The lifting estimates and face-stabilization proofs are in Appendices~\ref{section:cont-consistency} and~\ref{section:H2-coercivity}; Appendix~\ref{section:numerical-verification} records numerical verification details.

\section{Method}\label{section:method}
\subsection{Domain and Approximation Space}
Let $\Omega\subset\IR^d$, $d=2,3$, be bounded with smooth boundary $\Gamma=\partial\Omega$ and outward unit normal $n$; see Figure~\ref{fig:geometry}(\subref{fig:domain}). We write $(\cdot,\cdot)_\omega$ and $\|\cdot\|_\omega$ for the $L^2$ inner product and norm on $\omega$, and $H^s(\omega)$ for the Sobolev spaces. The notation $\lesssim$ suppresses constants independent of $h$ and the cut configuration. Put $V=H^4(\Omega)$.

\paragraph{Computational Mesh.}
A background mesh $\mcT_h$ of shape-regular elements covers $\Omega$ independently of its boundary. For a set $\omega$, define
\begin{equation}\label{eq:active-mesh}
\mcT_h(\omega)=\{T\in\mcT_h:T\cap\omega\neq\emptyset\}
\end{equation}
Thus $\mcT_h(\Omega)$ is the active mesh, illustrated in Figure~\ref{fig:geometry}(\subref{fig:mesh-a}). The stability argument on $V$ does not require a Cartesian mesh; the spline approximation and numerical experiments use uniform Cartesian grids, with $h$ the cell diameter. For the main method, $V_h$ consists of restrictions to the active mesh of tensor-product B-splines of degree $p\geq4$ and maximum continuity $C^{p-1}$. Their restrictions to $\Omega$ lie in $V$.

\begin{figure}
\centering
\begin{subfigure}[t]{0.36\textwidth}\centering
\includegraphics[height=0.74\linewidth]{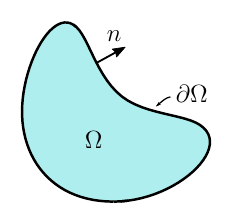}
\subcaption{Domain}
\label{fig:domain}
\end{subfigure}\hspace{0.06\textwidth}
\begin{subfigure}[t]{0.36\textwidth}\centering
\includegraphics[height=0.74\linewidth]{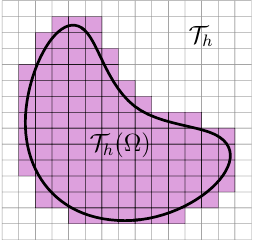}
\subcaption{Active Mesh}
\label{fig:mesh-a}
\end{subfigure}
\par\medskip
\begin{subfigure}[t]{0.36\textwidth}\centering
\includegraphics[height=0.74\linewidth]{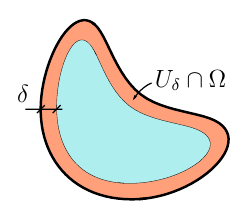}
\subcaption{Stabilization Strip}
\label{fig:U}
\end{subfigure}\hspace{0.06\textwidth}
\begin{subfigure}[t]{0.36\textwidth}\centering
\includegraphics[height=0.74\linewidth]{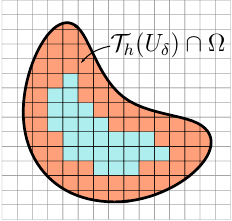}
\subcaption{Mesh-Based Stabilization Strip}
\label{fig:Uh}
\end{subfigure}
\caption{(a) Domain $\Omega$ with smooth boundary $\partial\Omega$ and outward normal $n$. (b) Active mesh $\mcT_h(\Omega)$ of background elements intersecting $\Omega$. (c) Interior stabilization strip $D_\delta=U_\delta\cap\Omega$ of width $\delta$. (d) Alternative mesh-based stabilization strip $D_\delta^h=\mcT_h(U_\delta)\cap\Omega$.}
\label{fig:geometry}
\end{figure}

\paragraph{Boundary Strip.}
Choose a tubular radius $\delta_0>0$ for $\Gamma$ and define
\begin{equation}\label{eq:strip}
U_\delta=\bigcup_{x\in\Gamma}B_\delta(x),
\qquad D_\delta=U_\delta\cap\Omega,
\qquad 0<\delta\leq\delta_0
\end{equation}
The bulk least-squares term is integrated over $D_\delta$, shown in Figure~\ref{fig:geometry}(\subref{fig:U}), and assembled from the regions $T\cap D_\delta$. Its inner boundary is the physical distance surface and is independent of the background mesh. We choose $\delta=\rho h$ with fixed $\rho>0$ for approximation estimates.

\begin{rem}[Mesh-Based Stabilization Strip]
The lifting argument also permits the mesh-based stabilization strip $D_\delta^h=\mcT_h(U_\delta)\cap\Omega$ shown in Figure~\ref{fig:geometry}(\subref{fig:Uh}), understood as the union of the intersecting cells restricted to $\Omega$, as in \cite{ElfLarLar19}. Using this strip avoids constructing the inner boundary of the physical strip. It contains the physical strip but changes with the mesh; the experiments use the boundary strip \eqref{eq:strip} to avoid that additional dependence.
\end{rem}

\subsection{LS Nitsche}
\paragraph{Boundary Penalty.}
Let $\nabla_\Gamma=(I-n\otimes n)\nabla$ and $\Delta_\Gamma=\nabla_\Gamma\cdot\nabla_\Gamma$ be the tangential gradient and Laplace--Beltrami operator. Define the penalty seminorm by its six trace terms:
\begin{align}\label{eq:bh}
\|v\|_{b_h}^2
&=\left(16+\frac1{3\tau}\right)
 \left(\delta^{-3}\|v\|_\Gamma^2+\frac{\delta^{-1}}{30}\|\partial_nv\|_\Gamma^2\right)\\
&\quad+4\left(\delta^{-1}\|\nabla_\Gamma v\|_\Gamma^2
 +\frac{\delta}{30}\|\nabla_\Gamma\partial_nv\|_\Gamma^2\right)\nonumber\\
&\quad+\delta\|\Delta_\Gamma v\|_\Gamma^2
 +\frac{\delta^3}{30}\|\Delta_\Gamma\partial_nv\|_\Gamma^2\nonumber
\end{align}
where $\tau>0$ is a parameter. The numerical weights follow from the lifting estimates used to prove Lemma~\ref{lem:const-bound}. The associated symmetric bilinear form is denoted by $b_h(v,w)$. We write $b_h(g,v)$ for the same pairings with the first argument's value and normal derivative replaced by the prescribed traces $(g_0,g_1)$. It is useful to separate the reciprocal bulk weight:
\begin{equation}\label{eq:boundary-quadratic-norm}
\|v\|_{b_h}^2=B_0(v)+\tau^{-1}B_1(v),
\qquad B_1(v)=\frac13\left(\delta^{-3}\|v\|_\Gamma^2+
 \frac{\delta^{-1}}{30}\|\partial_nv\|_\Gamma^2\right)
\end{equation}
where $B_0$ comprises the terms in \eqref{eq:bh} independent of $\tau$. Thus weakening the bulk stabilization increases the two standard trace penalties in a prescribed balance.

\paragraph{Method.}
Find $u_h\in V_h$ such that
\begin{equation}\label{eq:method}
A_h(u_h,v_h)=L_h(v_h)\qquad v_h\in V_h
\end{equation}
where
\begin{subequations}
\begin{align}
a_h(v,w)&=(\Delta v,\Delta w)_\Omega
 +\tau\delta^4(\Delta^2v,\Delta^2w)_{D_\delta}\label{eq:ah}\\
Q(v,w)&=(\partial_n\Delta v,w)_\Gamma-(\Delta v,\partial_nw)_\Gamma\label{eq:Q}\\
A_h(v,w)&=a_h(v,w)+Q(v,w)+Q(w,v)+\beta b_h(v,w)\label{eq:Ah}\\
L_h(v)&=(f,v)_\Omega+\tau\delta^4(f,\Delta^2v)_{D_\delta}
 +(g_0,\partial_n\Delta v)_\Gamma-(g_1,\Delta v)_\Gamma+\beta b_h(g,v)\label{eq:Lh}
\end{align}
\end{subequations}
Here $\tau>0$ trades weight between bulk stabilization and boundary penalization, while $\beta>0$ is the common boundary penalty parameter. The analysis below gives the sufficient choice $\beta=30$ for sufficiently thin boundary strips.

\begin{rem}[Comparison with Standard Nitsche]\label{rem:standard-nitsche}
Using the same strip-width scaling, write the value and normal-derivative penalties of standard symmetric Nitsche as
\begin{equation}\label{eq:standard-trace-penalties}
\gamma_0\delta^{-3}(v,w)_\Gamma
+\gamma_1\delta^{-1}(\partial_nv,\partial_nw)_\Gamma
\end{equation}
The corresponding terms in $\beta b_h(v,w)$ have the coefficients
\begin{equation}\label{eq:ls-trace-penalty-constants}
\gamma_0=\beta\left(16+\frac{1}{3\tau}\right),
\qquad \gamma_1=\frac{\gamma_0}{30}
\end{equation}
Thus the two penalty constants are tied by $\gamma_0=30\gamma_1$. Decreasing $\tau$ at fixed $\beta$ weakens the bulk stabilization and increases both coefficients. The sufficient choice $\beta=30$ is independent of $\tau$ and, for each fixed $\tau$, gives fixed, cut-independent $\gamma_0$ and $\gamma_1$ for sufficiently thin strips.

LS Nitsche additionally includes the bulk least-squares term and four tangential derivative penalties. Omitting these terms and their load counterparts yields standard symmetric Nitsche with matching $\gamma_0$ and $\gamma_1$; the volume Laplacian form and symmetric consistency terms are the same. The additional terms control the boundary lifting used in Section~\ref{section:error-est}.
\end{rem}

\paragraph{Consistency.}
For a sufficiently regular exact solution, $\Delta^2u=f$ on $D_\delta$ and the two traces agree with the data, so $b_h(u,v)=b_h(g,v)$. Green's formula then gives
\begin{equation}\label{eq:galorth}
A_h(u-u_h,v_h)=0\qquad v_h\in V_h
\end{equation}

\paragraph{Method Extended to Lower-Degree Splines.}
For degrees $p=2,3$, let $V_h$ consist of tensor-product B-splines with maximum continuity $C^{p-1}$ on a uniform Cartesian mesh. Evaluate higher derivatives elementwise and surface derivatives separately on each $\Gamma\cap T$. For $\gamma_p>0$, add the consistent face form
\begin{equation}\label{eq:H2-face-stabilization}
j_h(v,w)=\gamma_p h^{2p-3}\sum_{F\in\mcF_{h,\delta}}
(\ldb\partial_{n_F}^p v\rdb,\ldb\partial_{n_F}^p w\rdb)_{F\cap\Omega}
\end{equation}
where $\mcF_{h,\delta}$ comprises interior active faces intersecting $D_\delta$, $n_F$ is a fixed face normal, and $\ldb q\rdb=q|_{T^+}-q|_{T^-}$ uses that same normal on both sides. This penalizes $\ldb\Delta v\rdb$ for $p=2$ and $\ldb\partial_{n_F}\Delta v\rdb$ for $p=3$; the other face jump in Green's formula vanishes. Set $A_{h,j}=A_h+j_h$. The method is
\begin{equation}\label{eq:H2-method}
A_{h,j}(u_h,v_h)=L_h(v_h)\qquad v_h\in V_h
\end{equation}
The penalized jump vanishes for a smooth exact solution, so the load is unchanged. Section~\ref{section:face-analysis} gives the extension's stability and error estimates.

\section{Analysis}\label{section:error-est}
\subsection{Lifting and Stability}
Define the volume seminorm and energy norm by
\begin{equation}\label{eq:energy-norm}
\|v\|_{a_h}^2=a_h(v,v),\qquad
\tn v\tn_h^2=\|v\|_{a_h}^2+\|v\|_{b_h}^2
\end{equation}

\paragraph{Idea of the Lifting Argument.}
Let $z\in H^2(\Omega)$ be supported in $D_\delta$ with $z=w$ and $\partial_nz=\partial_nw$ on $\Gamma$. Two applications of Green's formula give
\begin{equation}\label{eq:lifting-identity-main}
Q(v,w)=(\Delta^2v,z)_{D_\delta}-(\Delta v,\Delta z)_{D_\delta}
\end{equation}
Consequently,
\begin{equation}\label{eq:lifting-CS-main}
|Q(v,w)|\leq\|v\|_{a_h}
\left(\|\Delta z\|_{D_\delta}^2+\tau^{-1}\delta^{-4}\|z\|_{D_\delta}^2\right)^{1/2}
\end{equation}
The lifting is supported in $D_\delta$ because the bulk least-squares term controls $\Delta^2v$ only there. A cubic Hermite lifting in the normal direction makes the final factor at most $c_\delta\|w\|_{b_h}$. Its derivative and mass estimates determine the six trace weights in \eqref{eq:bh}. In particular, pairing the bulk residual weighted by $\tau\delta^4$ with the lifting mass introduces $\tau^{-1}$ in the two standard trace penalties. No inverse inequality enters this argument.

\begin{lem}[Continuity of the Consistency Form]\label{lem:const-bound}
For $v,w\in V$ and $0<\delta\leq\delta_0$,
\begin{equation}\label{eq:trace}
|Q(v,w)|\leq c_\delta\|v\|_{a_h}\|w\|_{b_h},
\qquad c_\delta^2\leq\tfrac{33}{4}+C\delta
\end{equation}
where $C$ depends only on the boundary geometry and the chosen tubular radius, independently of $\tau$, $h$, and the cut configuration.
\end{lem}
\begin{proof}
Use the Hermite lifting bound \eqref{eq:lifting-bound} from Appendix~\ref{section:hermite-lifting} in \eqref{eq:lifting-CS-main}.
\end{proof}

\begin{ass}[Geometry, Parameters, and Regularity]\label{ass:analysis}
The domain is fixed and smooth, $\tau>0$ and $\rho>0$ are fixed, $\delta=\rho h$, and $\beta\geq30$ is fixed. We take $0<h\leq h_0\leq1$, with $h_0$ sufficiently small that the strip $D_\delta$ lies in the tubular neighborhood $U_{\delta_0}$, $c_\delta^2\leq12$, and the cut-independent trace inequality \eqref{eq:cut-trace-interpolation} holds. The homogeneous clamped problem has the regularity estimate
\begin{equation}\label{eq:dual-regularity}
\|z\|_{H^4(\Omega)}\leq C_{\rm reg}\|r\|_\Omega,
\qquad \Delta^2z=r,\quad z=\partial_nz=0\text{ on }\Gamma
\end{equation}
\end{ass}
The mesh-size restriction depends on the geometry and $\rho$, not on how the boundary cuts the mesh. The sufficient lower bound on $\beta$ is independent of $\tau$, but the error estimates below are stated for fixed $\tau$ and do not assert uniform error constants as $\tau$ varies. The regularity condition is used only for the $L^2$ error estimate.

\begin{lem}[Continuity and Coercivity]\label{lem:continuity-coercivity}
Under the geometric and parameter conditions of Assumption~\ref{ass:analysis},
\begin{alignat}{2}
|A_h(v,w)|&\leq C_A\tn v\tn_h\tn w\tn_h&\qquad &v,w\in V\label{eq:continuity}\\
A_h(v,v)&\geq\tfrac12\tn v\tn_h^2&\qquad &v\in V\label{eq:coercivity}
\end{alignat}
with a cut-independent constant $C_A$.
\end{lem}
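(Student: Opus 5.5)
We derive both estimates from Lemma~\ref{lem:const-bound}, which gives $|Q(v,w)|\leq c_\delta\|v\|_{a_h}\|w\|_{b_h}$ for all $v,w\in V$, together with the bound $c_\delta^2\leq12$ from Assumption~\ref{ass:analysis}. No inverse inequality is used. Every constant therefore comes from the lifting bound and the fixed parameter $\beta$, and is independent of the cut configuration.

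For continuity \eqref{eq:continuity}, split $A_h(v,w)$ into its four parts. The forms $a_h$ and $b_h$ are symmetric and positive semidefinite, so Cauchy--Schwarz gives $|a_h(v,w)|\leq\|v\|_{a_h}\|w\|_{a_h}$ and $|\beta b_h(v,w)|\leq\beta\|v\|_{b_h}\|w\|_{b_h}$. Lemma~\ref{lem:const-bound} applied twice, once with the roles of $v$ and $w$ swapped, gives $|Q(v,w)|+|Q(w,v)|\leq c_\delta(\|v\|_{a_h}\|w\|_{b_h}+\|w\|_{a_h}\|v\|_{b_h})$. A discrete Cauchy--Schwarz inequality then yields \eqref{eq:continuity} with $C_A=\max(1,\beta)+c_\delta\leq\max(1,\beta)+\sqrt{12}$. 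Since $\beta$ is fixed, this constant does not depend on the cut.

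For coercivity \eqref{eq:coercivity}, write
\[
A_h(v,v)=\|v\|_{a_h}^2+2Q(v,v)+\beta\|v\|_{b_h}^2 .
\]
Lemma~\ref{lem:const-bound} combined with Young's inequality gives, for any $\epsilon>0$,
\[
2|Q(v,v)|\leq 2c_\delta\|v\|_{a_h}\|v\|_{b_h}\leq \epsilon\|v\|_{a_h}^2+\epsilon^{-1}c_\delta^2\|v\|_{b_h}^2 .
\]
Choosing $\epsilon=\tfrac12$ gives
\[
A_h(v,v)\geq\tfrac12\|v\|_{a_h}^2+(\beta-2c_\delta^2)\|v\|_{b_h}^2 .
\]
With $c_\delta^2\leq12$ and $\beta\geq30$, we have $\beta-2c_\delta^2\geq6\geq\tfrac12$, and \eqref{eq:coercivity} follows.

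The main obstacle is not these algebraic steps but making sure Lemma~\ref{lem:const-bound} applies to arbitrary $v,w\in V=H^4(\Omega)$. Two points need checking. First, the lifting identity \eqref{eq:lifting-identity-main} needs enough regularity for the traces $\Delta v$ and $\partial_n\Delta v$; this holds for $H^4$ functions. Second, the geometric smallness $c_\delta^2\leq12$ must hold, which is where the restriction $h\leq h_0$ enters through $\delta=\rho h\leq\delta_0$. Both are supplied by Assumption~\ref{ass:analysis}, so the lemma follows directly.
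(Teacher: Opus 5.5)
Your proposal is correct and follows the paper's proof essentially line for line. Continuity comes from Cauchy--Schwarz plus Lemma~\ref{lem:const-bound} with $C_A=c_\delta+\max\{1,\beta\}$, and coercivity from Young's inequality, giving $\tfrac12\|v\|_{a_h}^2+(\beta-2c_\delta^2)\|v\|_{b_h}^2$, with $c_\delta^2\leq12$ and $\beta\geq30$ closing the argument.
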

\begin{proof}
\proofparagraph{Continuity.}
Lemma~\ref{lem:const-bound} and Cauchy--Schwarz give
\begin{align}
|A_h(v,w)|&\leq\|v\|_{a_h}\|w\|_{a_h}
+c_\delta(\|v\|_{a_h}\|w\|_{b_h}+\|v\|_{b_h}\|w\|_{a_h})
+\beta\|v\|_{b_h}\|w\|_{b_h}\\
&\leq(c_\delta+\max\{1,\beta\})\tn v\tn_h\tn w\tn_h
\end{align}

\proofparagraph{Coercivity.}
Lemma~\ref{lem:const-bound} and Young's inequality imply
\begin{align}
A_h(v,v)&\geq\|v\|_{a_h}^2-2c_\delta\|v\|_{a_h}\|v\|_{b_h}
+\beta\|v\|_{b_h}^2\\
&\geq\tfrac12\|v\|_{a_h}^2+(\beta-2c_\delta^2)\|v\|_{b_h}^2
\geq\tfrac12\tn v\tn_h^2
\end{align}
\end{proof}
Since $c_\delta^2\to33/4$ as $\delta\to0$, the condition $c_\delta^2\leq12$ holds for sufficiently thin strips. Thus $\beta=30$ is a sufficient common penalty for every cut configuration in this regime; the normalized coefficients, rather than the numerical value of $\beta$ alone, determine the effective boundary penalties.

\begin{cor}[Coercivity for Cubic Splines]\label{cor:cubic-coercivity}
Let $V_h$ consist of $C^2$ cubic tensor-product B-splines on a uniform Cartesian mesh in $d=2,3$, with $\delta=h$. For any $\tau>0$, $\gamma_3\geq1$, $\beta\geq30$, and sufficiently small $h$ depending only on the boundary geometry,
\begin{equation}\label{eq:cubic-coercivity}
A_{h,j}(v,v)\geq\tfrac12\tn v\tn_{h,j}^2\qquad v\in V_h
\end{equation}
where $\tn v\tn_{h,j}^2=\tn v\tn_h^2+j_h(v,v)$.
\end{cor}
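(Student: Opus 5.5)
The plan is to repeat the argument of Lemma~\ref{lem:continuity-coercivity} with the space $V$ replaced by $V_h$. Now $v\in V_h$ is only a piecewise polynomial in $C^2$, so $\Delta^2 v$ is defined elementwise and the lifting identity \eqref{eq:lifting-identity-main} picks up interior face terms. The face form $j_h$ will control those terms, and the goal is to show that the resulting consistency constant still satisfies $c^2\leq12$ once $\gamma_3\geq1$ and $h$ is small.

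\emph{Step 1 (lifting identity for $C^2$ cubics).} Let $z$ be the cubic Hermite lifting of the traces of $w$ from Appendix~\ref{section:hermite-lifting}, supported in $D_\delta$ with $\delta=h$. Apply Green's formula twice on each $T\cap D_\delta$ and sum over elements. Because $\nabla v$ and $\Delta v$ are continuous, the face terms $(\ldb\Delta v\rdb,\partial_{n_F}z)_F$ vanish. What remains is
\begin{equation*}
Q(v,w)=\sum_{T}(\Delta^2v,z)_{T\cap D_\delta}-(\Delta v,\Delta z)_{D_\delta}-\sum_{F\in\mcF_{h,\delta}}(\ldb\partial_{n_F}\Delta v\rdb,z)_{F\cap\Omega},
\end{equation*}
with a sign fixed by the orientation of $n_F$. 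All second derivatives of $v$ are continuous, so their tangential derivatives along $F$ are continuous as well. Hence $\ldb\partial_{n_F}\Delta v\rdb=\ldb\partial_{n_F}^3v\rdb$, which is exactly the jump penalized by $j_h$ with $p=3$ and weight $\gamma_3h^{3}$. Applying Cauchy--Schwarz as in \eqref{eq:lifting-CS-main} gives
\begin{equation*}
|Q(v,w)|\leq\bigl(\|v\|_{a_h}^2+j_h(v,v)\bigr)^{1/2}
\Bigl(\|\Delta z\|_{D_\delta}^2+\tau^{-1}\delta^{-4}\|z\|_{D_\delta}^2+\gamma_3^{-1}h^{-3}\textstyle\sum_F\|z\|_{F\cap\Omega}^2\Bigr)^{1/2}.
\end{equation*}

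\emph{Step 2 (bounding the face sum by the lifting mass).} On a uniform Cartesian mesh, the faces with normal $e_i$ together form the hyperplanes $\{x_i=kh\}$. Summing $\|z\|_F^2$ over these faces is therefore a Riemann sum in the $x_i$ direction. The one-dimensional estimate $h\sum_k\phi(kh)\leq\int\phi+h\int|\phi'|$, applied with $\phi=z^2$, yields
\begin{equation*}
h^{-3}\sum_F\|z\|_{F\cap\Omega}^2\leq h^{-4}\sum_{i=1}^d\bigl(\|z\|_{D_\delta}^2+2h\|z\|_{D_\delta}\|\partial_iz\|_{D_\delta}\bigr).
\end{equation*}
This bound involves only the explicit lifting $z$, so it holds for every cut configuration.

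\emph{Step 3 (absorption into $b_h$).} Insert the Appendix mass and derivative bounds for the Hermite lifting. These bound $\delta^{-4}\|z\|^2$ by the normal trace terms $\delta^{-3}\|w\|_\Gamma^2$ and $\delta^{-1}\|\partial_nw\|_\Gamma^2$, and bound $\delta^{-3}\|z\|\|\nabla z\|$ by the same terms plus the tangential terms, all up to $O(\delta)$ corrections. The aim is to show that, with $\gamma_3\geq1$, the face contribution fits into the $\tau$-independent coefficients $16$ and $4$ of $B_0$ in \eqref{eq:bh}. That would give $c^2\leq\tfrac{33}{4}+c_{F}+C h\leq12$ for small $h$. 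Coercivity then follows exactly as in Lemma~\ref{lem:continuity-coercivity}, with $\|v\|_{a_h}^2$ replaced by $\|v\|_{a_h}^2+j_h(v,v)$ and $\beta\geq30\geq2c^2$.

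The main obstacle is the explicit constant bookkeeping in Step 3. Unlike the bulk mass term, the face term carries no factor $\tau^{-1}$, so it has to be absorbed by the fixed coefficient $16$ in $B_0$ rather than by $B_1$. If the Riemann-sum constant $c_F$, which is of size $d$ times the lifting mass constant, does not fit under the margin $12-\tfrac{33}{4}$, I would first use the fact that the Hermite mass profile integrates to a small number ($\int_0^1(1-3s^2+2s^3)^2ds=13/35$). I would also exploit the $\gamma_3^{-1}$ factor, noting that the stated threshold $\gamma_3\geq1$ suggests this is exactly where the margin is spent.
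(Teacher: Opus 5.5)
\textbf{There is a gap in Step 2.} Your Steps 1 and 3 follow the paper's proof: cellwise Green's formula with $\ldb\Delta v\rdb=0$ and $\ldb\partial_{n_F}\Delta v\rdb=\ldb\partial_{n_F}^3v\rdb$, weighted Cauchy--Schwarz against $j_h$, a lattice trace bound for the face sum, and absorption of that sum into the $\tau$-independent part $B_0$. The problem is at exactly the point where cut-independence has to be earned.

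The one-sided Hermite lifting satisfies $z=w$ on $\Gamma$, so it does not vanish there.
\begin{itemize}
\item If you extend $z$ by zero outside $\Omega$, every coordinate line crossing $\Gamma$ sees a jump of $z^2$ of size $|w|^2$. The one-dimensional estimate on the whole line then produces jump terms, and your display omits them.
\item If you instead apply the estimate only on the physical chord (line $\cap\,\Omega$), the interval-averaging argument needs a full grid interval inside the chord next to each sample point. This fails where a coordinate line is nearly tangent to $\Gamma$ and its chord in $\Omega$ is shorter than a cell, which is precisely the cut-dependent situation.
\end{itemize}
So the claim that the bound ``holds for every cut configuration'' is not justified as written. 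Separately, the face hyperplanes have spacing $\ell=h/\sqrt d$, not $h$. This costs a factor $\sqrt d$ in the leading term.

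\textbf{How the paper closes the gap.} It extends the product representation to the two-sided tube $\Gamma\times(-\delta,\delta)$, using even reflection of the value part and odd reflection of the normal part. Because $\phi_0'(0)=\phi_1(0)=0$, the reflected lifting $\bar z$ lies in $H^1(\IR^d)$ with compact support. The lattice estimate \eqref{eq:lattice-trace} then holds on entire lines, uniformly in the grid translation, and restricting to $F\cap\Omega$ only decreases the sum.

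Parity removes the value--normal cross terms and doubles the one-sided moments. This gives $\delta^{-4}\|\bar Z\|_\times^2\le\frac{13}{280}B_0$ and $\delta^{-2}\|\nabla_\times\bar Z\|_\times^2\le\frac12B_0$. Hence $\sum_F h^{-3}\|z\|_{F\cap\Omega}^2\le C_3B_0$ with $C_3<\frac45(1+C\delta)$ for $d\le3$.

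Alternatively, you could keep your one-sided $z$ and count the boundary jumps in the total variation. By the area formula, lines in direction $e_i$ cross $\Gamma$ with transverse density $|n\cdot e_i|$, so the jumps add at most $\sqrt d\,h^{-3}\|w\|_\Gamma^2\le\frac{\sqrt3}{16}B_0$.

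\textbf{Step 3 is then settled.} Either route also finishes the bookkeeping you left open: $c_\delta^2+C_3/\gamma_3\le\frac{33}{4}+\frac45+O(\delta)<12$. The margin $12-\frac{33}{4}=\frac{15}{4}$ is far from exhausted, so $\gamma_3\ge1$ is a convenient normalization rather than the place where the margin is spent.
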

\begin{proof}
Both boundary traces have the regularity required by the Hermite lifting, and face stabilization controls the additional term in cellwise Green's formula. The coercivity argument of Lemma~\ref{lem:continuity-coercivity} then applies with $\|v\|_{a_h}^2$ replaced by $\|v\|_{a_h}^2+j_h(v,v)$; see Appendix~\ref{section:cubic-coercivity}.
\end{proof}

\subsection{Interpolation Estimates}
\label{section:interpolation}

Under Assumption~\ref{ass:analysis}, fix $p\geq4$ and $4\leq s\leq p+1$. On the smooth domain there is a bounded extension operator $E:H^s(\Omega)\to H^s(\IR^d)$ satisfying
\begin{equation}\label{eq:extension-continuity}
\|Ev\|_{H^s(\IR^d)}\lesssim\|v\|_{H^s(\Omega)}
\end{equation}
see \cite{Fo95}. Let $\Pi_h$ be a locally stable spline quasi-interpolation operator on the background grid and define
\begin{equation}\label{eq:interpolant-def}
\pi_h v=\Pi_h(Ev)
\end{equation}
The local approximation estimates are
\begin{equation}\label{eq:interpol-basic}
\|D^m(Ev-\pi_h v)\|_T
\lesssim h^{s-m}\|Ev\|_{H^s(\mcN_h(T))}
\qquad m=0,\ldots,4
\end{equation}
where $D^m$ denotes the collection of Cartesian derivatives of order $m$, with $D^0$ the identity, and $\mcN_h(T)$ is the union of supports of the spline basis functions that are nonzero on $T$. For fixed $p$ these patches have bounded overlap. See \cite{IGA-h-refined} for spline approximation estimates. Derivatives on collections of elements are taken elementwise, and integrals over such collections mean integrals over their union.

\begin{lem}[Interpolation]\label{lem:interpolation}
Under Assumption~\ref{ass:analysis}, for $v\in H^s(\Omega)$, $4\leq s\leq p+1$, we have
\begin{equation}\label{eq:interpol-energy}
\tn v-\pi_h v\tn_h\lesssim h^{s-2}\|v\|_{H^s(\Omega)}
\end{equation}
The constant is independent of how the boundary cuts the mesh.
\end{lem}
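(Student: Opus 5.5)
Set $\eta=Ev-\pi_h v$, which is defined on the whole active mesh, and note that on $\Omega$ we have $\eta=v-\pi_h v$. The plan is to bound each term of $\tn\eta\tn_h^2$ separately: the two volume terms in $\|\eta\|_{a_h}^2$ and the six boundary terms in $\|\eta\|_{b_h}^2$. Each is bounded elementwise by \eqref{eq:interpol-basic}, and we sum over elements using the bounded overlap of the patches $\mcN_h(T)$. At the end the extension bound \eqref{eq:extension-continuity} converts $\|Ev\|_{H^s(\IR^d)}$ into $\|v\|_{H^s(\Omega)}$. Throughout we use $\delta=\rho h$ with $\rho$ fixed, so any power of $\delta$ can be replaced by the same power of $h$, up to constants depending on $\rho$ and $\tau$.

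\emph{Volume terms.} Since $\|\Delta\eta\|_\Omega\le\sum_T\|D^2\eta\|_{T}$ (summed over active $T$, with $\|\cdot\|_T\ge\|\cdot\|_{T\cap\Omega}$), the estimate \eqref{eq:interpol-basic} with $m=2$ gives $\|\Delta\eta\|_\Omega^2\lesssim h^{2(s-2)}\|Ev\|_{H^s}^2$. For the strip term we use $\Delta^2\eta$, which needs $m=4$; this is admissible because $p\ge4$ and $s\ge4$. We get $\tau\delta^4\|\Delta^2\eta\|_{D_\delta}^2\lesssim h^4\,h^{2(s-4)}\|Ev\|_{H^s}^2=h^{2(s-2)}\|Ev\|_{H^s}^2$. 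Enlarging the integration domain from the cut pieces to whole cells is harmless here, and no cut-dependence enters.

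\emph{Boundary terms.} These are the main obstacle, since $\Gamma\cap T$ can be an arbitrarily small piece of $\Gamma$. We use the cut-independent trace inequality \eqref{eq:cut-trace-interpolation} that Assumption~\ref{ass:analysis} provides. Presumably it has the form
\[
\|w\|_{\Gamma\cap T}^2\lesssim h^{-1}\|w\|_T^2+h\|\nabla w\|_T^2 ,
\]
applied on the full cell $T$ rather than on $T\cap\Omega$; this is valid with a constant independent of the cut because $\Gamma$ is smooth and $h\le h_0$. Each boundary quantity is a combination of derivatives of $\eta$ of order $k$, with $k\le3$. For the value and normal derivative this is clear. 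For the surface operators we use the smoothness of $\Gamma$ to bound tangential derivatives pointwise:
\[
|\nabla_\Gamma w|\le|\nabla w|,\qquad |\Delta_\Gamma w|\lesssim|D^2w|+|\nabla w| ,
\]
and $|\Delta_\Gamma\partial_n\eta|\lesssim|D^3\eta|+|D^2\eta|+|D\eta|$, where the constants involve curvature bounds of $\Gamma$ and the extended normal. Applying the trace inequality to $D^k\eta$ then requires $D^{k+1}\eta$ with $k+1\le4$, which is admissible, and yields
\[
\|D^k\eta\|_{\Gamma\cap T}^2\lesssim h^{2(s-k)-1}\|Ev\|^2_{H^s(\mcN_h(T))}.
\]

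\emph{Scaling check.} Each weight in \eqref{eq:bh} is a power of $\delta$ that exactly compensates the order of the corresponding trace. For the first term, $\delta^{-3}\cdot h^{2s-1}=h^{2s-4}$. For the normal derivative, $\delta^{-1}\cdot h^{2s-3}=h^{2s-4}$. The tangential-gradient terms give $\delta^{-1}h^{2s-3}$ and $\delta\, h^{2s-5}$, and the Laplace--Beltrami terms give $\delta h^{2s-5}$ and $\delta^3h^{2s-7}$; all of these equal $h^{2s-4}$. The lower-order pieces in the surface-derivative bounds carry extra powers of $h$, and since $h\le1$ they are dominated. Summing over active cut cells, using the bounded overlap of the patches, and then applying \eqref{eq:extension-continuity} gives \eqref{eq:interpol-energy}. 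The constant depends on the geometry, $\rho$, $\tau$, $\beta$ and $p$, but not on the cut configuration.
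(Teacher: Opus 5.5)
Your proposal is correct and follows essentially the same route as the paper. It bounds the two volume terms by the local spline estimates with bounded overlap and extension stability, and the six boundary terms by applying the cut-independent full-cell trace inequality to derivatives of $Ev-\pi_h v$ of orders zero through three, with geometry-bounded coefficients in the surface operators and $\delta=\rho h$ balancing each weight in $\|\cdot\|_{b_h}$. One small fix: write the volume step for squares, $\|\Delta\eta\|_\Omega^2\lesssim\sum_T\|D^2\eta\|_T^2$, since summing unsquared norms before invoking bounded overlap would lose a factor depending on the number of elements.
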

\begin{proof}
Set $e=v-\pi_h v$ on $\Omega$ and $\widetilde e=Ev-\pi_h v$ on the background mesh. The local estimates, bounded overlap, and extension stability give
\begin{equation}\label{eq:interpolation-volume}
\|\Delta e\|_\Omega^2
+\tau\delta^4\|\Delta^2 e\|_{D_\delta}^2
\lesssim h^{2(s-2)}\|v\|_{H^s(\Omega)}^2
\end{equation}
For the boundary terms, use the cut-independent trace inequality on each boundary element,
\begin{equation}\label{eq:cut-trace-interpolation}
\|q\|_{T\cap\partial\Omega}^2
\lesssim h^{-1}\|q\|_T^2+h\|\nabla q\|_T^2
\qquad q\in H^1(T)
\end{equation}
valid for a fixed smooth boundary and sufficiently small $h$; see \cite{HaHaLa03}. Apply this inequality to derivatives of $\widetilde e$ of orders zero through three. The coefficients in the surface differential operators and their required derivatives are bounded by the geometry. This yields
\begin{align}\label{eq:interpolation-boundary}
&h^{-3}\|e\|_\Gamma^2
+h^{-1}\bigl(\|\nabla_\Gamma e\|_\Gamma^2+\|\partial_n e\|_\Gamma^2\bigr)
\nonumber\\&\quad
+h\bigl(\|\Delta_\Gamma e\|_\Gamma^2+\|\nabla_\Gamma\partial_n e\|_\Gamma^2\bigr)
+h^3\|\Delta_\Gamma\partial_n e\|_\Gamma^2
\nonumber\\&\qquad\lesssim
\sum_{m=0}^4h^{2m-4}\|D^m\widetilde e\|_{\mcT_h(\Gamma)}^2
\lesssim h^{2(s-2)}\|v\|_{H^s(\Omega)}^2
\end{align}
In particular, the highest-order term requires derivatives of order four:
\begin{equation}\label{eq:other-interp-e}
h^3\|\Delta_\Gamma\partial_n e\|_\Gamma^2
\lesssim\sum_{m=1}^4h^{2m-4}\|D^m\widetilde e\|_{\mcT_h(\Gamma)}^2
\end{equation}
The terms with lower derivatives account for derivatives of the normal and the tangential projection. Since $\delta\sim h$ and $\tau$ is fixed, \eqref{eq:interpolation-boundary} bounds $\|e\|_{b_h}^2$. Combining it with \eqref{eq:interpolation-volume} proves the lemma.
\end{proof}

\subsection{Error Estimates}

\begin{thm}\label{thm:error-estimates}
Under Assumption~\ref{ass:analysis}, let $p\geq4$. If $u\in H^s(\Omega)$, $4\leq s\leq p+1$, then
\begin{equation}\label{eq:errorest-energy}
\tn u-u_h\tn_h\lesssim h^{s-2}\|u\|_{H^s(\Omega)}
\end{equation}
Moreover,
\begin{equation}\label{eq:errorest-L2}
\|u-u_h\|_\Omega\lesssim h^s\|u\|_{H^s(\Omega)}
\end{equation}
The constants are independent of the cut configuration. In particular, $u\in H^{p+1}(\Omega)$ gives orders $p-1$ and $p+1$, respectively.
\end{thm}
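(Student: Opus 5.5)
The energy estimate follows the standard Céa-type argument. Split $u-u_h=(u-\pi_h u)+(\pi_h u-u_h)$ and set $e_h=\pi_h u-u_h\in V_h$. Restrictions of $V_h$ to $\Omega$ lie in $V=H^4(\Omega)$, and since $s\geq4$ so does $u$; hence Lemma~\ref{lem:continuity-coercivity} applies to both $e_h$ and $u-\pi_h u$. Coercivity \eqref{eq:coercivity}, Galerkin orthogonality \eqref{eq:galorth} and continuity \eqref{eq:continuity} give
\begin{equation*}
\tfrac12\tn e_h\tn_h^2\leq A_h(e_h,e_h)=A_h(\pi_h u-u,e_h)\leq C_A\tn u-\pi_h u\tn_h\tn e_h\tn_h .
\end{equation*}
Therefore $\tn e_h\tn_h\leq 2C_A\tn u-\pi_h u\tn_h$. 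The triangle inequality and Lemma~\ref{lem:interpolation} then yield \eqref{eq:errorest-energy}. The constants $C_A$ and the interpolation constant are cut-independent, so the bound is as well.

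For the $L^2$ estimate I would use an Aubin--Nitsche duality argument. Let $e=u-u_h$ and let $z$ solve the clamped problem with $\Delta^2z=e$ and $z=\partial_nz=0$ on $\Gamma$, so that $\|z\|_{H^4(\Omega)}\leq C_{\rm reg}\|e\|_\Omega$ by \eqref{eq:dual-regularity}. The key step is adjoint consistency, which holds because $A_h$ is symmetric. Since $z$ has vanishing traces, $b_h(z,v)=0$ for all $v$, and $Q(z,v)$ involves $\partial_n\Delta z$ and $\Delta z$ paired with the traces of $v$. Green's formula applied twice gives
\begin{equation*}
(\Delta v,\Delta z)_\Omega+Q(z,v)+Q(v,z)=(v,\Delta^2z)_\Omega ,
\end{equation*}
where $Q(v,z)=0$ because $z=\partial_nz=0$. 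The remaining term is the bulk stabilization $\tau\delta^4(\Delta^2 v,\Delta^2 z)_{D_\delta}=\tau\delta^4(\Delta^2 v,e)_{D_\delta}$, which is not consistent for the dual problem. Hence
\begin{equation*}
\|e\|_\Omega^2=A_h(e,z)-\tau\delta^4(\Delta^2e,e)_{D_\delta}.
\end{equation*}
In the first term, Galerkin orthogonality lets me subtract $\pi_h z$; continuity and Lemma~\ref{lem:interpolation} with $s=4$ then give
\begin{equation*}
A_h(e,z-\pi_h z)\lesssim\tn e\tn_h\,h^2\|z\|_{H^4(\Omega)}\lesssim h^{s}\|u\|_{H^s(\Omega)}\|e\|_\Omega .
\end{equation*}

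I expect the inconsistency term to be the main obstacle. A naive bound gives $\tau\delta^4\|\Delta^2e\|_{D_\delta}\|e\|_{D_\delta}\lesssim\delta^2\tn e\tn_h\|e\|_\Omega\lesssim h^{s}\|u\|_{H^s}\|e\|_\Omega$, using $\tau^{1/2}\delta^2\|\Delta^2e\|_{D_\delta}\leq\tn e\tn_h$ and $\delta=\rho h$ with $\tau$ fixed. This is already of the optimal order $h^s$, so no sharper treatment is needed. To be safe I would write it as $\tau\delta^4(\Delta^2e,\Delta^2z)_{D_\delta}$, which is exactly the same quantity and gives the same bound. Dividing by $\|e\|_\Omega$ yields \eqref{eq:errorest-L2}. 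The orders $p-1$ and $p+1$ follow by taking $s=p+1$. Throughout, cut-independence rests on the cut-independent constants in Lemmas~\ref{lem:continuity-coercivity} and~\ref{lem:interpolation}.
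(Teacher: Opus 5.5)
Your proposal is correct and follows the paper's proof essentially step by step, including the identity $\|e\|_\Omega^2=A_h(e,z-\pi_hz)-\tau\delta^4(\Delta^2e,e)_{D_\delta}$ and the bound of the stabilization term by $\sqrt{\tau}\,\delta^2\|e\|_{a_h}\|e\|_\Omega$. The only difference is minor: for the energy estimate you apply coercivity to the discrete error $\pi_hu-u_h$ and finish with the triangle inequality, whereas the paper has coercivity on all of $H^4(\Omega)$, so it applies coercivity directly to $e=u-u_h$ and needs no triangle inequality.
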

\begin{proof}
Put $e=u-u_h$ and use the continuity constant $C_A$. Since $e\in H^4(\Omega)$, coercivity on the full space, Galerkin orthogonality, and continuity imply
\begin{align}
\tfrac12\tn e\tn_h^2
&\leq A_h(e,e)
=A_h(e,u-\pi_hu)
\\&\leq C_A\tn e\tn_h\tn u-\pi_hu\tn_h
\end{align}
Combining this bound with Lemma~\ref{lem:interpolation} yields \eqref{eq:errorest-energy}.

For the $L^2$ estimate, let $z\in H^2_0(\Omega)$ solve the dual problem
\begin{equation}\label{eq:dual-problem}
(\Delta z,\Delta v)_\Omega=(e,v)_\Omega
\qquad\forall v\in H^2_0(\Omega)
\end{equation}
The regularity estimate in Assumption~\ref{ass:analysis} gives $\|z\|_{H^4(\Omega)}\lesssim\|e\|_\Omega$.
Thus $\Delta^2z=e$ in $\Omega$ and $z=\partial_n z=0$ on $\partial\Omega$. 
Green's formula and the vanishing boundary traces of $z$ give
\begin{align}\label{eq:dual-consistency}
A_h(e,z)
&=\|e\|_\Omega^2
+\tau\delta^4(\Delta^2e,e)_{D_\delta}
\end{align}
The additional term is the contribution of the bulk least-squares stabilization and must be retained. By Galerkin orthogonality,
\begin{align}
\|e\|_\Omega^2
&=A_h(e,z-\pi_hz)
-\tau\delta^4(\Delta^2e,e)_{D_\delta}
\\&\leq C_A\tn e\tn_h\tn z-\pi_hz\tn_h
+\sqrt{\tau}\,\delta^2\|e\|_{a_h}\|e\|_\Omega
\\&\lesssim h^2\tn e\tn_h\|e\|_\Omega
\end{align}
The last step uses Lemma~\ref{lem:interpolation} with $s=4$, \eqref{eq:dual-regularity}, $\delta\sim h$, and fixed $\tau$. Division by $\|e\|_\Omega$, unless $e=0$, followed by \eqref{eq:errorest-energy} proves \eqref{eq:errorest-L2}.
\end{proof}

\subsection{Error Estimates for Lower-Degree Splines}\label{section:face-analysis}
The extended method \eqref{eq:H2-method} covers $C^1$ quadratic and $C^2$ cubic splines, with coercivity for the latter established in Corollary~\ref{cor:cubic-coercivity}. For $C^1$ quadratics, the normal trace has only $H^1(\Gamma)$ regularity, so Appendix~\ref{section:H2-coercivity} regularizes its lifting. To state an explicit sufficient condition for coercivity, put
\begin{equation}\label{eq:H2-geometric-condition}
\theta=\delta\|\kappa\|_{L^\infty(\Gamma)},
\qquad \sigma=\delta^2\|\nabla_\Gamma\kappa\|_{L^\infty(\Gamma)}
\end{equation}
where $\kappa=\nabla_\Gamma n$ is the shape operator and the tensor norms are Frobenius norms.

\begin{lem}[Coercivity for Quadratic Splines]\label{lem:quadratic-coercivity}
Let $V_h$ consist of $C^1$ quadratic tensor-product B-splines on a uniform Cartesian mesh in $d=2,3$, with $\delta=h$ below the two-sided tubular radius. If $\theta,\sigma\leq1/10$, then for any $\tau>0$, $\gamma_2\geq1$, and $\beta\geq30$,
\begin{equation}\label{eq:H2-spline-coercivity}
A_{h,j}(v,v)\geq\tfrac15\tn v\tn_{h,j}^2\qquad v\in V_h
\end{equation}
The bound is independent of the cut configuration and $\tau$.
\end{lem}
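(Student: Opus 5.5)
The plan is to repeat the coercivity argument of Lemma~\ref{lem:continuity-coercivity} with the quantity $\|v\|_{a_h}^2+j_h(v,v)$ playing the role of $\|v\|_{a_h}^2$. The only new ingredient is a replacement for Lemma~\ref{lem:const-bound} that holds for $C^1$ quadratics.

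\textbf{Why Lemma~\ref{lem:const-bound} must be redone.} For $v\in V_h$ with $p=2$, the function $\Delta v$ is only piecewise defined, with jumps across faces. In addition, $\partial_n v|_\Gamma$ lies only in $H^1(\Gamma)$, so the Hermite lifting $z$ of Appendix~\ref{section:hermite-lifting} has insufficient regularity and the terms $\Delta_\Gamma\partial_n w$ in $\|w\|_{b_h}$ are defined only cellwise on $\Gamma\cap T$.

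\textbf{Step 1: cellwise lifting identity.} I first derive the analogue of \eqref{eq:lifting-identity-main} by applying Green's formula cellwise on $T\cap D_\delta$. Two applications give
\[
Q(v,w)=\sum_T(\Delta^2 v,z)_{T\cap D_\delta}-(\Delta v,\Delta z)_{D_\delta}
+\sum_{F\in\mcF_{h,\delta}}\Bigl((\ldb\partial_{n_F}\Delta v\rdb,z)_{F\cap\Omega}-(\ldb\Delta v\rdb,\partial_{n_F}z)_{F\cap\Omega}\Bigr)
\]
For $p=2$ we have $\Delta^2v=0$ and $\partial_{n_F}\Delta v=0$ cellwise. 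The surviving face term is therefore $(\ldb\Delta v\rdb,\partial_{n_F}z)_{F\cap\Omega}$. Because $v$ is $C^1$, tangential second derivatives are continuous across $F$, so $\ldb\Delta v\rdb=\ldb\partial_{n_F}^2v\rdb$, which is exactly the quantity penalized in $j_h$.

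\textbf{Step 2: regularized lifting.} Here I replace the Hermite profile for the normal trace by a mollified tangential version. Concretely, I smooth $\partial_n w$ along $\Gamma$ at scale $\delta$ (for instance by a surface convolution, or an $H^1$-to-$H^2$ regularization of $\partial_n w$ in normal coordinates). I then take $z$ equal to the cubic Hermite lifting of $w$ plus a correction carrying the regularized normal data. The correction compensates for the mismatch in $\partial_n z$ using a profile in the normal distance whose derivative at the boundary fixes $\partial_nz$.

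The estimates this construction needs are the following.
\begin{itemize}
\item $\|\Delta z\|_{D_\delta}^2\leq c\,\|w\|_{b_h}^2$, with explicit constants perturbed by $\theta$ and $\sigma$. These enter through the Jacobian of normal coordinates and through the commutators between $\Delta$ and the splitting into tangential and normal parts, which involve $\kappa$ and $\nabla_\Gamma\kappa$.
\item $\delta^{-4}\|z\|_{D_\delta}^2\leq c\,B_1(w)$.
\item $h^{-(2p-3)}\sum_F\|\partial_{n_F}z\|_{F\cap\Omega}^2=h^{-1}\sum_F\|\partial_{n_F}z\|_{F\cap\Omega}^2\lesssim\|w\|_{b_h}^2$. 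This follows from a face trace inequality applied to $z$ on the strip, using $\delta=h$.
\end{itemize}

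\textbf{Step 3: conclusion by Young's inequality.} By Cauchy--Schwarz,
\[
|Q(v,v)|\leq \hat c\,\bigl(\|v\|_{a_h}^2+\gamma_2^{-1}j_h(v,v)\bigr)^{1/2}\|v\|_{b_h}.
\]
Using $\gamma_2\geq1$, Young's inequality gives the lower bound
\[
(1-\epsilon)\bigl(\|v\|_{a_h}^2+j_h\bigr)+(\beta-\hat c^{2}/\epsilon)\|v\|_{b_h}^2 .
\]
With $\epsilon=4/5$ we obtain the constant $1/5$, provided $\hat c^2\leq 4\cdot 29/5$-ish, which requires $\hat c^{2}\lesssim 24$ with $\beta=30$. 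The $\tau^{-1}$ weight in $B_1$ absorbs the bulk pairing exactly as in \eqref{eq:lifting-CS-main}, so the bound is independent of $\tau$.

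\textbf{Main obstacle.} I expect the hardest step to be tracking explicit constants through Step 2. The regularization of an $H^1(\Gamma)$ normal trace must cost only a bounded multiple of $\delta\|\nabla_\Gamma\partial_nw\|_\Gamma^2$ and of $\delta^3\|\Delta_\Gamma\partial_n w\|^2$, the latter taken cellwise and including jumps across $\Gamma\cap F$. The curvature conditions $\theta,\sigma\leq1/10$ must then keep the total below the threshold that $\beta=30$ can absorb.
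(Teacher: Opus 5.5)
\textbf{Verdict: genuine gap.} Your skeleton is the paper's: cellwise Green's formula leaving the face term $(\ldb\partial_{n_F}^2v\rdb,\partial_{n_F}z)_{F\cap\Omega}$, a lifting in which only the normal part is regularized, a face bound for $\partial_{n_F}z$, and Young's inequality with parameter $4/5$. Your threshold $\hat c^2\le 116/5$ is exactly the constant the paper attains.

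\textbf{The gap is Step~2, which carries the whole lemma.} Smoothing $g_1=\partial_nw|_\Gamma$ at the fixed scale $\delta$ and then fixing the trace mismatch with a normal profile does not give an admissible lifting. The residual $g_1-\tilde g_1$ is still only in $H^1(\Gamma)$. For any fixed width $\epsilon$, the term $\epsilon\phi_1(r/\epsilon)(g_1-\tilde g_1)$ has a surface Laplacian outside $L^2$, because $\Delta_\Gamma g_1$ carries point or line masses at $\Gamma\cap F$, where $\nabla_\Gamma g_1$ jumps. Nothing in $\|w\|_{b_h}$ controls these jumps. So if your regularization cost involves them, as your ``main obstacle'' paragraph suggests, it cannot be absorbed.

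\textbf{The missing idea} is a normal width that shrinks with tangential frequency. The paper expands $g_1=\sum_m b_me_m$ in eigenfunctions of $-\Delta_\Gamma$ and lifts each mode by $d_m\operatorname{sgn}(r)\phi_1(|r|/d_m)$, with $d_m=\delta/\sqrt{1+\delta^2\lambda_m}$; see \eqref{eq:H2-mode-width}. Then every second derivative of the normal part is bounded by a fixed multiple of $\mathcal P_1$, that is, by $\delta^{-1}\|g_1\|_\Gamma^2$ and $\delta\|\nabla_\Gamma g_1\|_\Gamma^2$ alone. The broken $\Delta_\Gamma g_1$ penalty is never used. Mollifying at a scale proportional to $r$, as in Gagliardo's trace extension, would be a qualitative alternative, but you would still owe explicit constants.

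\textbf{The explicit constants are not established.} The statement is quantitative ($\beta\ge30$, constant $1/5$, under only $\theta,\sigma\le1/10$), so you must actually prove $\hat c^2\le116/5$. With the face bound stated only as $\lesssim$, you get coercivity only for some unspecified large $\beta$. The paper obtains $73/5$ for the volume factor \eqref{eq:H2-physical-volume}. That bound uses $\theta,\sigma\le1/10$ to bound the Jacobian by $23/20$, the mixed and tangential metric multipliers by $10/9$ and $(10/9)^2$, and the first-derivative remainders by $1/(4\delta)$. It obtains $43/5$ for the faces \eqref{eq:H2-cartesian-faces}.

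\textbf{The face bound must be cut-independent.} A per-cell trace inequality on $T\cap\Omega$ is not cut-independent, and the lifting extended by zero outside $\Omega$ jumps across $\Gamma$. The paper instead uses the two-sided lifting $\bar z$, with even reflection of the value part and odd reflection of the normal part. It then applies the lattice estimate \eqref{eq:lattice-trace}, which is uniform in the grid translation, fed by $\delta^{-2}\|\nabla\bar z\|^2\le\tfrac{11}{18}B_0$ and $\|D^2\bar z\|^2\le23B_0$.

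\textbf{Two smaller corrections.} First, for tensor-product quadratics neither $\Delta^2v$ nor $\partial_{n_F}\Delta v$ vanishes cellwise; for example, $\Delta^2(x_1^2x_2^2)=8$. Only the jump $\ldb\partial_{n_F}\Delta v\rdb$ vanishes, so $(\Delta_h^2v,z)_{D_\delta}$ must stay in the identity, as you in fact keep it in Step~3. Second, the factor $\gamma_2^{-1}$ belongs on the face norm of $z$, not on $j_h(v,v)$.
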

\begin{proof}
Apply Young's inequality to the bound \eqref{eq:H2-Q-bound} from Appendix~\ref{section:quadratic-coercivity}.
\end{proof}
The bounds $\theta,\sigma\leq1/10$ are sufficient, not necessary; within the tubular radius, repeating the estimates of Appendix~\ref{section:physical-norms} with the actual values $\theta,\sigma<1$ gives a geometry-dependent, possibly larger sufficient $\beta$.

\begin{thm}[Error Estimates for Lower-Degree Splines]\label{thm:face-error}
Let $V_h$ be the full tensor-product B-spline space of degree $p=2,3$ and continuity $C^{p-1}$. Assume the hypotheses of Lemma~\ref{lem:quadratic-coercivity} for $p=2$ or Corollary~\ref{cor:cubic-coercivity} for $p=3$, with fixed parameters, $\delta=h\leq1$, and the trace inequality \eqref{eq:cut-trace-interpolation}. If $u\in H^4(\Omega)$, the solution of \eqref{eq:H2-method} satisfies
\begin{equation}\label{eq:H2-error-energy}
\tn u-u_h\tn_{h,j}\lesssim h^{p-1}\|u\|_{H^4(\Omega)}
\end{equation}
With the regularity estimate \eqref{eq:dual-regularity},
\begin{equation}\label{eq:H2-error-L2}
\|u-u_h\|_\Omega\lesssim h^{2p-2}\|u\|_{H^4(\Omega)}
\end{equation}
The constants are independent of the cut configuration.
\end{thm}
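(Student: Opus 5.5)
We follow the proof of Theorem~\ref{thm:error-estimates}, replacing $A_h$ by $A_{h,j}$ and $\tn\cdot\tn_h$ by $\tn\cdot\tn_{h,j}$. The main new ingredient is continuity of $A_{h,j}$ for the error $e=u-u_h$. For $p=2,3$ the discrete functions lie only in $H^{p+1}$ cellwise, so $e\notin H^4(\Omega)$. We therefore do not use the full-space coercivity directly. Instead we split $e=\eta+\xi_h$ with $\eta=u-\pi_hu$ and $\xi_h=\pi_hu-u_h\in V_h$.

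\textbf{Galerkin orthogonality.} First we check that $A_{h,j}(u-u_h,v_h)=0$. Since $u\in H^4(\Omega)$, we have $j_h(u,v_h)=0$: the jumps of $\partial_{n_F}^pu$ vanish because $p\le3$. The consistency identity \eqref{eq:galorth} then holds for $A_h$. Here Green's formula is applied only to $u$, and the terms involving $v_h$ are boundary traces and cell integrals, which are well defined.

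\textbf{Energy estimate.} Coercivity from Lemma~\ref{lem:quadratic-coercivity} or Corollary~\ref{cor:cubic-coercivity} gives
\[
c\tn\xi_h\tn_{h,j}^2\le A_{h,j}(\xi_h,\xi_h)=A_{h,j}(\eta,\xi_h).
\]
It then suffices to prove a weak continuity bound
\[
|A_{h,j}(\eta,v_h)|\lesssim \tn\eta\tn_{*}\tn v_h\tn_{h,j}.
\]
The terms $a_h$, $\beta b_h$ and $j_h$ are handled by Cauchy--Schwarz. The term $Q(\eta,v_h)$ is bounded by $\|\partial_n\Delta\eta\|_\Gamma\|v_h\|_\Gamma+\|\Delta\eta\|_\Gamma\|\partial_nv_h\|_\Gamma$. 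The penalty weights $\delta^{-3}$ and $\delta^{-1}$ in $\|v_h\|_{b_h}$ absorb these, leaving $\delta^{3/2}\|\partial_n\Delta\eta\|_\Gamma+\delta^{1/2}\|\Delta\eta\|_\Gamma$ in $\tn\eta\tn_*$. The term $Q(v_h,\eta)$ is the delicate one, since $\Delta v_h$ has no trace control in $\tn\cdot\tn_{h,j}$. For it we reuse the cellwise lifting bound from the appendix used in the coercivity proofs, namely \eqref{eq:H2-Q-bound}. This bound controls $Q(v_h,w)$ by $(\|v_h\|_{a_h}^2+j_h(v_h,v_h))^{1/2}$ times a lifting norm of $w$. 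Applying it with $w=\eta$ gives the bound $\lesssim\tn v_h\tn_{h,j}\,\|\eta\|_{b_h}$ plus the regularized-lifting correction for $p=2$. I expect this step to be the main obstacle: the appendix bound is derived for $w\in V_h$, so I would check that its proof only uses the traces of $w$, and otherwise estimate the lifting of $\eta$ directly.

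\textbf{Interpolation.} Since $u\in H^4$ only, we use \eqref{eq:interpol-basic} with $s=p+1$ restricted to derivatives of order at most $p+1$, and $s=4$ capped. Derivatives of order $m$ with $p+1<m\le4$ vanish on the spline cellwise, so for those we use $\|D^m\widetilde\eta\|=\|D^mEu\|_{\mcT_h(\Gamma)}\lesssim\|u\|_{H^4}$. With the trace inequality \eqref{eq:cut-trace-interpolation}, every weighted term is then $\lesssim h^{2(p-1)}\|u\|_{H^4}^2$ or better. For example, $\tau\delta^4\|\Delta^2\eta\|^2_{D_\delta}\lesssim h^4\|u\|_{H^4}^2\le h^{2(p-1)}\|u\|^2_{H^4}$ for $p\le3$. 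For the jump term, $j_h(\eta,\eta)=\gamma_ph^{2p-3}\sum\|\ldb\partial^p_{n_F}\pi_hu\rdb\|^2$. The trace inequality with $\|D^p\widetilde\eta\|\lesssim h^{4-p}$ (when $p=2$ use $h^{1}\|u\|_{H^3}$) gives order $h^{2p-2}$. The triangle inequality then yields \eqref{eq:H2-error-energy}.

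\textbf{$L^2$ estimate.} For \eqref{eq:H2-error-L2} we repeat the duality argument with $z\in H^4$ solving \eqref{eq:dual-problem}. Since $j_h(e,z)=0$ and $z$ has vanishing traces, we obtain
\[
A_{h,j}(e,z)=\|e\|^2_\Omega+\tau\delta^4(\Delta^2e,e)_{D_\delta}.
\]
We subtract $\pi_hz$ and apply the continuity bound above with the roles of the arguments exchanged. This gives $h^{p-1}\|z\|_{H^4}$ from the interpolation step with the dual data, and $\sqrt\tau\delta^2\|e\|_{a_h}\|e\|_\Omega$ for the stabilization remainder. Hence
\[
\|e\|_\Omega\lesssim h^{p-1}\tn e\tn_{*}\lesssim h^{2p-2}\|u\|_{H^4},
\]
where $h^2\le h^{p-1}$ for $p\le3$. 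The same continuity issue as in the energy step arises in the term $Q(e,z-\pi_hz)$, and it is treated in the same way.
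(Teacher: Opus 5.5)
Your argument is correct, but the abstract error argument differs from the paper's.

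\textbf{The paper's route.} The paper notes that the consistency bounds \eqref{eq:cubic-consistency} and \eqref{eq:H2-Q-bound} remain valid with both arguments in $H^4(\Omega)+V_h$. The $H^4$ summand, extended to the active mesh, has no jumps in derivatives through order three, so the cellwise Green identities and their face terms are unchanged. This gives coercivity and continuity of $A_{h,j}$ on the sum space in the single norm $\tn\cdot\tn_{h,j}$. C\'ea's lemma \eqref{eq:H2-cea} and the duality argument then run exactly as in Theorem~\ref{thm:error-estimates}, with $|A_{h,j}(e,z-\pi_hz)|\lesssim\tn e\tn_{h,j}\tn z-\pi_hz\tn_{h,j}$.

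\textbf{Your route.} You use coercivity only on $V_h$, split $e=\eta+\xi_h$, and prove a Strang-type bound in which $Q(\eta,v_h)$ is handled by Cauchy--Schwarz on $\Gamma$. The price is the extra traces $\delta^{1/2}\|\Delta\eta\|_\Gamma$ and $\delta^{3/2}\|\partial_n\Delta\eta\|_\Gamma$ in $\tn\eta\tn_*$. The trace inequality \eqref{eq:cut-trace-interpolation} bounds these at the same order through the $m=2,3,4$ derivative terms, so the rates are unaffected.

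\textbf{Comparison.} Your approach avoids extending the cellwise Green identity to nondiscrete first arguments. The paper's gives quasi-optimality in the energy norm itself and a shorter duality step.

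\textbf{Two small points.} First, the check you flag for $Q(v_h,\eta)$ succeeds. The liftings behind \eqref{eq:cubic-consistency} and \eqref{eq:H2-Q-bound} use only the traces of $w$, and for $w=\eta$ these have the required regularity. The $p=2$ regularization is already built into \eqref{eq:H2-Q-bound}, so no separate correction term arises. Second, in the $L^2$ step, read your factor $\tn e\tn_*$ as $\tn\eta\tn_*+\tn\xi_h\tn_{h,j}$, keeping the split when bounding $A_{h,j}(e,z-\pi_hz)$. Taken literally, $\tn e\tn_*$ contains $\delta^{1/2}\|\Delta\xi_h\|_\Gamma$, which is not controlled cut-independently without an inverse inequality.
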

\begin{proof}
The energy estimate follows from C\'ea's lemma and spline interpolation; the $L^2$ estimate follows by duality. See Appendix~\ref{section:face-error-proof} for details.
\end{proof}

\section{Numerical Results}\label{section:numerics}
\subsection{Setup and Diagnostics}
We use $C^3$ quartic tensor-product B-splines, with $\beta=30$, $\tau=0.01$, and $\delta=h$ unless stated otherwise. All experiments use the exact ellipse
\begin{equation}\label{eq:ellipse-domain}
\Omega=\{x:x_1^2/a^2+x_2^2/b^2<1\},\qquad a=3/2,\quad b=1
\end{equation}
Throughout this section, $h$ denotes the cell diameter and $\ell$ the Cartesian cell width:
\begin{equation}\label{eq:numerical-mesh-size}
\ell=\frac{2b}{N},\qquad h=\sqrt2\ell=\frac{2\sqrt2 b}{N}
\end{equation}
All stabilization widths, penalty scalings, and removal budgets use $h$; grid translations are measured in units of $\ell$. The grid lines are $x_i=-a+(i+s_x)\ell$, $y_j=-b+(j+s_y)\ell$. The default phase is $(s_x,s_y)=(0.23,0.37)$. Exact boundary parametrization and normal coordinates are used for boundary and strip integration, with intervals split at cell crossings. Assembly and error evaluation use 12 and 18 Gauss points per direction, respectively. All curvature contributions are retained. Geometric moments, boundary differentiation, polynomial reproduction, quadrature refinement, and independent spectral calculations provide verification checks. Their settings and representative numerical values are given in Appendix~\ref{section:numerical-verification}. On these meshes we assess discrete coercivity directly; we do not evaluate the sufficient geometry constants of the $H^4$ analysis in Appendix~\ref{section:cont-consistency}.

\paragraph{Comparison with Standard Nitsche.}
The standard method retains only the volume Laplacian form, symmetric consistency terms, and the value and normal-derivative penalties. In the fixed-method study below, its penalties are calibrated to give a positive definite coarse-grid matrix and then held fixed. In the other comparisons they match those of LS Nitsche: $\beta(16+1/(3\tau))\delta^{-3}$ and $\beta(16+1/(3\tau))/(30\delta)$, equal to $1480h^{-3}$ and $(148/3)h^{-1}$ at the default parameters. This matched comparison is a diagnostic of the stabilization terms, not a comparison with an independently tuned method; coercivity of standard Nitsche is not ensured by these coefficients.

\paragraph{Conditioning and Basis Function Removal.}
We use symmetric diagonal scaling and, following \cite{ElfLarLar18}, remove the smallest-energy basis functions subject to
\begin{equation}\label{eq:basis-removal-budget}
\sum_{i\in I_r}\tn\varphi_i\tn_h^2\leq(0.01h^{p-1})^2
\end{equation}
This selection uses the diagonal of the energy Gram matrix $G_{ij}=a_h(\varphi_i,\varphi_j)+b_h(\varphi_i,\varphi_j)$, with no multiplier $\beta$. In Section~\ref{section:H2-numerics}, $G$ additionally includes $j_h(\varphi_i,\varphi_j)$. All compared methods use the same retained space. Removal is active in the convergence and cut-position studies and disabled in the fixed-method and parameter studies so that their spaces stay fixed as the coefficients change. The reported convergence curves therefore concern the retained spaces with budget \eqref{eq:basis-removal-budget}; the full-space error estimates concern the unmodified spline space. Basis removal is a separate conditioning measure and is not needed for the coercivity proofs.

\paragraph{Stability Diagnostic.}
Let $A$ be the stiffness matrix of the method under consideration, restricted to the retained spline basis, and restrict $G$ to the same basis. We report the minimum generalized eigenvalue
\begin{equation}\label{eq:numerical-eigenvalue}
\lambda_{\min}(A,G)=\min_{\mathbf v\neq0}
\frac{\mathbf v^TA\mathbf v}{\mathbf v^TG\mathbf v}
\end{equation}
All comparisons use the same $G$, including for standard Nitsche. We locate the minimum by bisection of the positive-definiteness threshold of $A-\lambda G$ and report estimated 1-norm condition numbers after symmetric diagonal scaling. A diagnostic is marked unresolved if its scaled condition estimate times machine precision exceeds $10^{-4}$. A positive lower bound on \eqref{eq:numerical-eigenvalue} measures coercivity in the physical energy norm; matrix conditioning also depends on the basis and scaling. The analysis does not provide a cut-independent matrix condition-number bound. Solver settings and representative eigenvalues and condition estimates are recorded in Appendix~\ref{section:numerical-verification}.

\subsection{Convergence}
We prescribe
\begin{align}
u(x_1,x_2)&=0.1\sin(2x_1)+0.1x_1\cos(3x_2)\\
f(x_1,x_2)&=\Delta^2u=1.6\sin(2x_1)+8.1x_1\cos(3x_2)
\end{align}
with the corresponding nonhomogeneous traces. Figure~\ref{fig:ellipse-solution} shows the solution and active mesh at $N=8$.
\begin{figure}[htbp]
\centering
\includegraphics[width=0.53\textwidth]{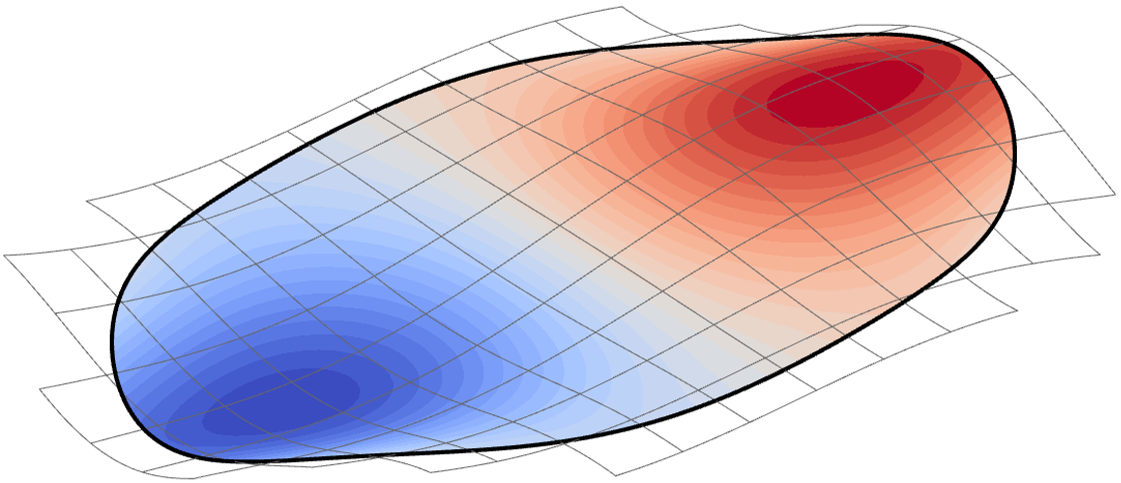}
\caption{Computed solution on the exact ellipse and its active mesh at $N=8$.}\label{fig:ellipse-solution}
\end{figure}

Table~\ref{tab:ellipse-convergence} reports the full energy, Laplacian, and $L^2$ errors. The Laplacian rate approaches the predicted order three. The observed full energy rate is about $3.4$--$3.5$ on the last refinements. We interpret this through the dominant bulk residual term: integration of a smooth error profile over a strip of width $h$ can supply an additional factor $h^{1/2}$ in its $L^2$ norm. The proven energy order remains three. The $L^2$ rates are consistent with order five.

\begin{table}[htbp]
\centering\small
\caption{Convergence with $C^3$ quartic splines and the default parameters. The removed basis functions satisfy $\sum_{i\in I_r}\tn\varphi_i\tn_h^2\leq(0.01h^3)^2$. Rates are between consecutive refinements.}\label{tab:ellipse-convergence}
\setlength{\tabcolsep}{3.5pt}
\begin{tabular}{@{}rrrrrrrr@{}}\toprule
$N$ & DoFs (Removed) & Energy & Rate & Laplacian & Rate & $L^2$ & Rate\\\midrule
8 & 194 (8) & $2.32\cdot10^{-2}$ & -- & $3.75\cdot10^{-3}$ & -- & $2.92\cdot10^{-5}$ & -- \\
16 & 519 (9) & $2.27\cdot10^{-3}$ & 3.35 & $3.81\cdot10^{-4}$ & 3.30 & $5.85\cdot10^{-7}$ & 5.64 \\
32 & 1626 (5) & $2.11\cdot10^{-4}$ & 3.43 & $4.33\cdot10^{-5}$ & 3.14 & $1.13\cdot10^{-8}$ & 5.69 \\
64 & 5641 (13) & $1.93\cdot10^{-5}$ & 3.45 & $5.24\cdot10^{-6}$ & 3.05 & $2.51\cdot10^{-10}$ & 5.49 \\
\bottomrule\end{tabular}
\end{table}

We also repeat refinement for the 25 phases $(s_x,s_y)\in\{0.07,0.27,0.47,0.67,0.87\}^2$. Figure~\ref{fig:worst-cut-convergence} reports the largest errors over these translations. Their variation is small, and the maximum errors have essentially the same rates as Table~\ref{tab:ellipse-convergence}.
\begin{figure}[t]
\centering
\includegraphics[width=0.66\textwidth]{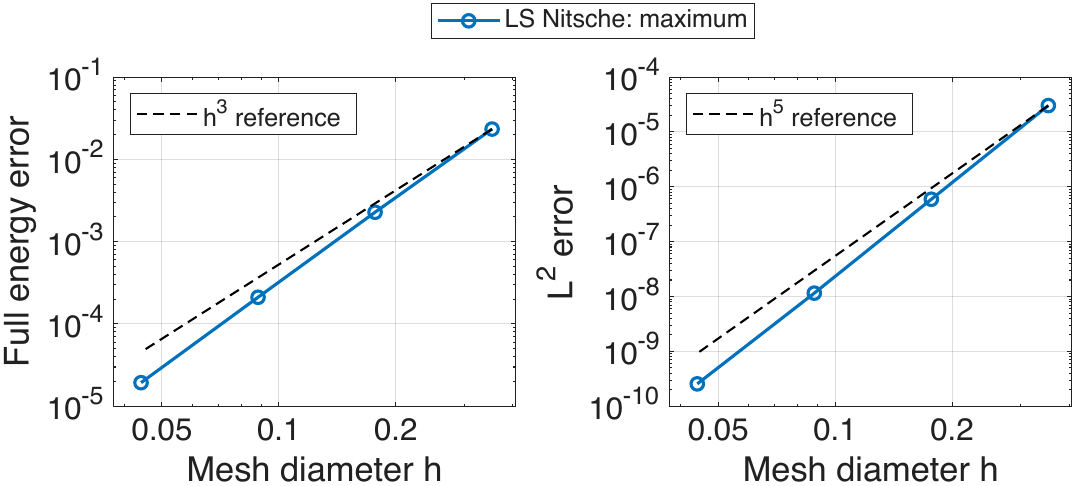}
\caption{Largest errors over 25 translations at each mesh size, with the default parameters and removal budget \eqref{eq:basis-removal-budget}. Dashed lines give the $h^3$ and $h^5$ reference rates.}\label{fig:worst-cut-convergence}
\end{figure}

\subsection{Cut Dependence and Coercivity}\label{section:cut-numerics}
\paragraph{Coercivity with the Method Held Fixed.}\label{section:fixed-method}
Following \cite{ElfLarLar19}, we refine the approximation space while keeping every coefficient and integration region in the bilinear form fixed. Set $\delta=\delta_{\rm fix}=\sqrt2/4$, $\beta=30$, and $\tau=0.01$. The phases $(N/8)\,(0.27,0.47)\bmod1$, for $N=8,16,32,64$, give nested grids. Removal is disabled. We multiply the two standard Nitsche penalties by the first factor in $1,10,100,1000$ that makes its coarse-grid matrix positive definite; this gives factor $1000$, held fixed thereafter.

Figure~\ref{fig:fixed-method} shows that LS Nitsche retains a minimum generalized eigenvalue of about $0.90$, whereas the calibrated standard method becomes indefinite at the first refinement. Thus coarse-grid calibration alone does not ensure stability on richer spaces for the same form. The fixed width satisfies $\delta_{\rm fix}\|\kappa\|_\infty=3\sqrt2/8>1/2$, beyond the auxiliary geometric bound in Appendix~\ref{section:cont-consistency}; the experiment demonstrates discrete stability on the tested spaces, not full-space coercivity at this width.
\begin{figure}[htbp]
\centering
\includegraphics[width=0.66\textwidth]{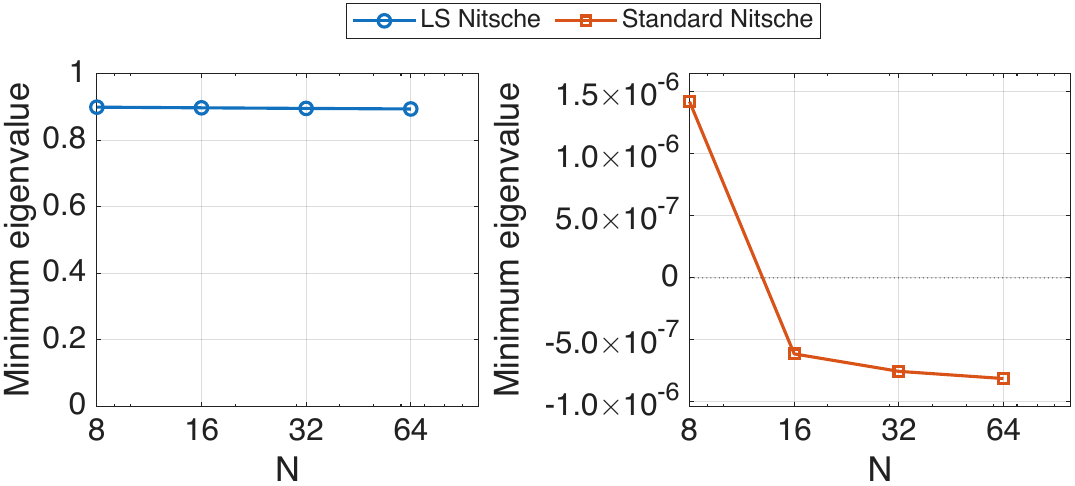}
\caption{Minimum generalized eigenvalues under nested refinement with fixed $\delta=\sqrt2/4$, $\beta=30$, and $\tau=0.01$, without removal. The standard penalties are calibrated on the coarse grid and multiplied by $1000$ at every level. Both methods use the same fixed energy metric; the vertical scales differ.}\label{fig:fixed-method}
\end{figure}

\paragraph{Changing the Cut Position.}
At $N=12$ and $\delta=h$, we use the 25 translations above and two additional families of seven cuts. In the first, a grid vertex approaches $(a,b)/\sqrt2$ from inside the ellipse; in the second, a vertical grid line approaches $x=a$. Their offsets are $10^{-j}\ell$, $j=1,\ldots,7$, producing physical cell-area fractions down to $2.1\cdot10^{-14}$. All 39 LS Nitsche systems are positive definite, with minimum generalized eigenvalues about $0.90$. The matched standard systems are indefinite, with values about $-0.01$; see Figure~\ref{fig:ellipse-experiment}. The condition estimates in Figure~\ref{fig:ellipse-experiment} (right) still vary with the cut, illustrating that coercivity in the physical norm does not by itself ensure uniform matrix conditioning.
\begin{figure}[htbp]
\centering
\includegraphics[width=0.66\textwidth]{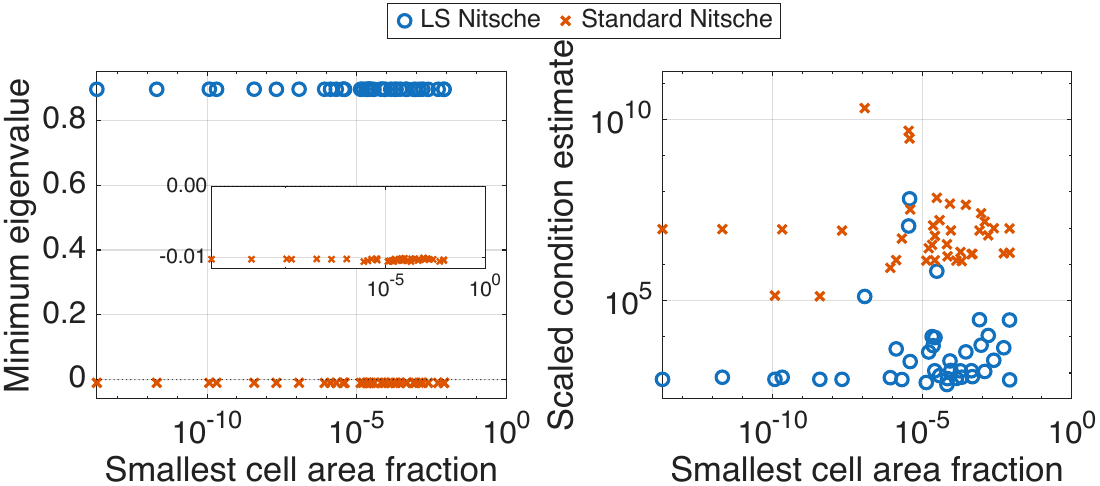}
\caption{The 39 cut-position tests with matched boundary penalties and the same retained basis. Left: minimum generalized eigenvalues; the inset magnifies the negative standard values. Right: scaled condition estimates.}\label{fig:ellipse-experiment}
\end{figure}

\subsection{Parameter Sensitivity}\label{section:parameter-study}
For $N=8,16,32$ at the default phase and $\delta=h$, we vary $\tau=10^{-j}$, $j=0,\ldots,6$, at $\beta=30$, and separately $\beta\in\{3,30,60,300,1200,6000,30000\}$ at $\tau=0.01$. Removal is disabled. To compare errors in a fixed norm, all energy errors in Figure~\ref{fig:parameter-study} use the reference value $\tau=0.01$, independently of the parameter in the method; the norm contains no multiplier $\beta$.

Decreasing $\tau$ initially improves the $L^2$ error, but very small values are detrimental. At $N=32$, the errors for $\tau=10^{-2},10^{-3},10^{-6}$ are $1.1\cdot10^{-8}$, $5.6\cdot10^{-9}$, and $1.0\cdot10^{-7}$, respectively, while the corresponding condition estimates are $1.8\cdot10^4$, $2.6\cdot10^4$, and $3.0\cdot10^7$. Decreasing $\tau$ reduces the bulk residual weight and increases the two standard trace penalties through $16+1/(3\tau)$. Among the tested values, $\tau=10^{-3}$ gives the smallest $L^2$ error at every mesh size; the smallest reference energy errors occur at $\tau=10^{-2}$ or $10^{-1}$.

At $\beta=3$, all three LS Nitsche matrices are indefinite, with minimum generalized eigenvalues $-0.039$, $-0.048$, and $-0.054$ for $N=8,16,32$, respectively. The $L^2$ errors are about $250$, $21$, and $8.5$ times those at $\beta=30$. These cases illustrate loss of coercivity and accuracy when the penalty is too small; their linear solves and spectral diagnostics are resolved. All other tested LS Nitsche matrices are positive definite. The proof of Lemma~\ref{lem:continuity-coercivity} gives the sufficient condition $\beta\geq2c_\delta^2+1/2$ for coercivity constant $1/2$; its right-hand side tends to $17$ as $\delta\to0$.

Increasing $\beta$ also eventually reduces accuracy: at $N=32$, changing $\beta$ from $30$ to $30000$ increases the $L^2$ error from $1.1\cdot10^{-8}$ to $6.9\cdot10^{-7}$. This trend is consistent with locking as nonhomogeneous traces on the curved boundary are imposed increasingly strongly. For this problem the tests support $\tau\in[10^{-3},10^{-2}]$ and $\beta=30$ as practical choices balancing accuracy and conditioning; they do not identify universal optimal parameters.
\begin{figure}[htbp]
\centering
\includegraphics[width=\textwidth]{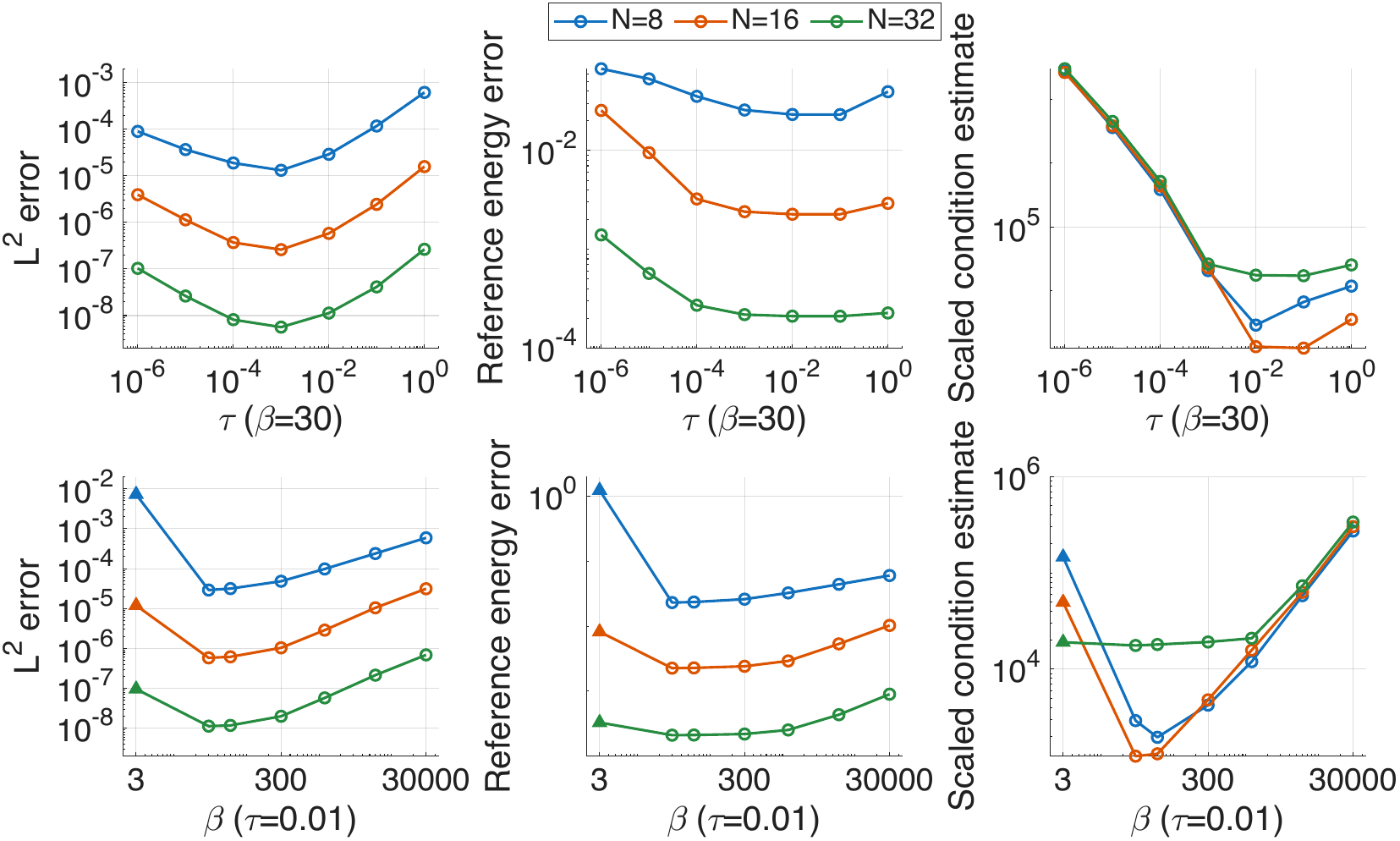}
\caption{Parameter sensitivity without removal. Top: varying $\tau$ at $\beta=30$. Bottom: varying $\beta$ at $\tau=0.01$; filled triangles mark the indefinite systems at $\beta=3$. Energy errors use the common reference value $\tau=0.01$.}\label{fig:parameter-study}
\end{figure}

\FloatBarrier
\subsection{Lower-Degree Splines}\label{section:H2-numerics}
The $C^2$ cubic extension is established analytically in Corollary~\ref{cor:cubic-coercivity} and Theorem~\ref{thm:face-error}; the tests in this subsection concern $C^1$ quadratics. We test \eqref{eq:H2-method} with $C^1$ quadratic splines, $\gamma_2=1$, and the default geometry and parameters. Only the second normal-derivative jump is penalized; the elementwise bi-Laplacian need not vanish. Removal uses \eqref{eq:basis-removal-budget} with $p=2$ and the augmented norm $\tn \cdot\tn_{h,j}$. All three methods in Figure~\ref{fig:H2-stability} use this same retained space and metric. Face stabilization raises the minimum generalized eigenvalues across the 39 cuts from $0.029$--$0.16$ to about $0.98$; all resolved matched standard systems are indefinite. Under refinement, the stabilized energy and $L^2$ rates reach about $1.3$ and $2.5$, consistent with the guaranteed orders one and two. Adding face stabilization improves the coercivity margin, but does not uniformly reduce the $L^2$ error. The sufficient conditions $\theta,\sigma\leq1/10$ hold for this ellipse when $N\geq43$: $\|\kappa\|_\infty=3/2$, $\|\nabla_\Gamma\kappa\|_\infty\leq45/16$, and $h=2\sqrt2/N$. Thus the finest tested mesh, $N=64$, lies in this certified coercivity regime; on the coarser meshes we assess coercivity numerically.
\begin{figure}[htbp]
\centering
\includegraphics[width=0.66\textwidth]{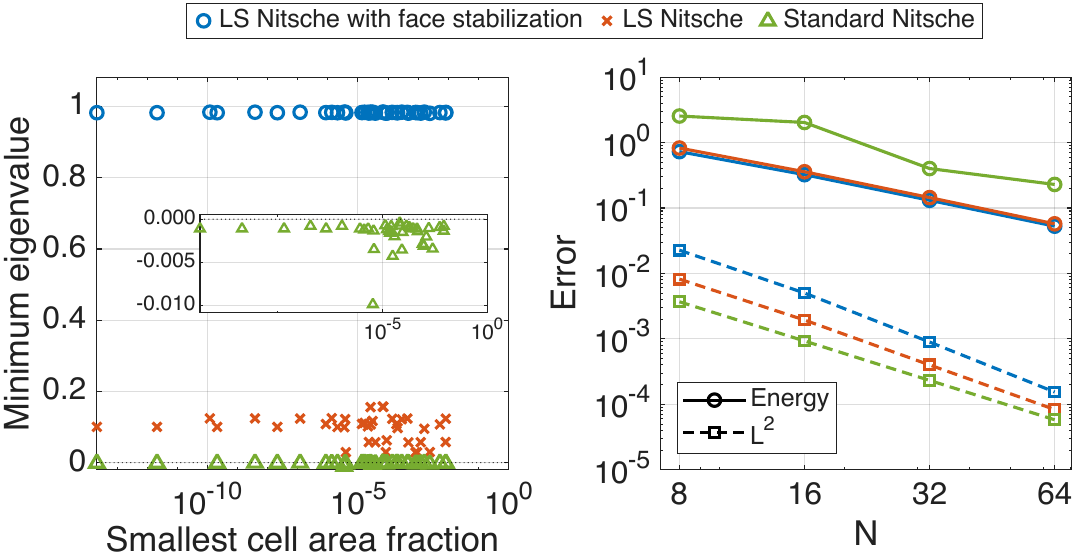}
\caption{$C^1$ quadratic comparison with matched boundary penalties. Left: minimum generalized eigenvalues in the augmented norm; the inset magnifies the standard values. One standard cut is omitted because its scaled condition estimate times machine precision exceeds $10^{-4}$. Right: energy errors (solid circles) and $L^2$ errors (dashed squares), with method colors shared by both panels.}\label{fig:H2-stability}
\end{figure}

\FloatBarrier
\section{Conclusions}\label{section:conclusions}
The explicit lifting of the two Dirichlet traces provides a symmetric LS Nitsche method for the biharmonic problem with coercivity on $H^4(\Omega)$, without inverse inequalities. For sufficiently thin boundary strips and a chosen bulk stabilization parameter, fixed, cut-independent penalties suffice. The lifting determines the relative weights of the value, normal-derivative, and tangential penalties, and gives energy and $L^2$ error orders $p-1$ and $p+1$ for splines of degree $p\geq4$. Face stabilization extends the analysis to maximum-continuity splines of degrees $p=2,3$. Two-dimensional exact-ellipse experiments with quartic and quadratic splines support the stability and approximation results and show how insufficient penalties cause loss of coercivity, while excessive penalties impair accuracy. The three-dimensional and cubic-spline results remain analytical; cut-independent matrix conditioning is a separate issue from the coercivity established here.

The stabilization could also be extended from boundary conditions to interface conditions. For higher-order problems this is of particular interest in Kirchhoff--Love shell configurations, where displacement and rotation continuity must be imposed across patches; see \cite{JoLaLa17} for Nitsche coupling on multipatch surfaces in the second-order case. Further questions concern liftings near corners and edges on piecewise smooth domains, and stability and accuracy when the boundary and its curvature are approximated; see \cite{LarLarMah26} for error estimates for Nitsche's method on approximate domains in the second-order case.

\appendix
\small

\section{The Boundary Lifting}\label{section:cont-consistency}
We prove Lemma~\ref{lem:const-bound} and Corollary~\ref{cor:cubic-coercivity}. Generic constants depend only on the fixed boundary geometry and tubular radius.

\subsection{Normal Coordinates}
\label{section:tech-tools}

Choose $\delta_0>0$ below the tubular radius of the smooth boundary so that
\begin{equation}\label{eq:normal-coordinates}
\Phi(y,r)=y-rn(y)
\qquad (y,r)\in\Gamma\times(0,\delta_0)
\end{equation}
is a diffeomorphism onto $D_{\delta_0}$. Thus $r$ is the distance measured inward from $\Gamma$, and $\partial_n=-\partial_r$ on $\Gamma$. Let $\kappa=\nabla_\Gamma n$ denote the shape operator, restricted to the tangent bundle of $\Gamma$, and set
\begin{equation}\label{eq:tube-metric}
M_r=(I_\Gamma-r\kappa)^{-1},
\qquad J_r=\det(I_\Gamma-r\kappa)
\end{equation}
where $I_\Gamma$ is the tangential identity. We decrease $\delta_0$, if necessary, so that $\delta_0\|\kappa\|_{L^\infty(\Gamma)}\leq 1/2$.
The volume element is $dx=J_r\,d\Gamma\,dr$. Surface Hessians and derivatives of tangential tensors below are understood covariantly. These normal coordinates and the associated surface calculus are described in \cite[Section~2]{DzEl13}.

Let $\Gamma_r=\Phi(\Gamma,r)$ and denote its Laplace--Beltrami operator, pulled back to $\Gamma$, by $\Delta_{\Gamma_r}$, so that $\Delta_{\Gamma_0}=\Delta_\Gamma$. For $Z=z\circ\Phi$, the Laplacian in these coordinates is
\begin{align}\label{eq:tube-laplacian}
(\Delta z)\circ\Phi
&=\partial_{rr}Z+(\partial_r\log J_r)\partial_r Z
+J_r^{-1}\nabla_\Gamma\cdot(J_rM_r^2\nabla_\Gamma Z)\\
&=\partial_{rr}Z+\Delta_\Gamma Z
+\underbrace{(\partial_r\log J_r)\partial_r Z
+(\Delta_{\Gamma_r}-\Delta_\Gamma)Z}_{:=R_rZ}
\end{align}
The identity follows from the divergence formula in the metric induced by $\Phi$. In particular, it retains the normal first-derivative term. Smoothness of $\kappa$ gives constants depending only on the fixed geometry such that
\begin{align}\label{eq:tube-coefficient-bounds}
0<J_r&\leq 1+C_0\delta,
\qquad 0\leq r\leq\delta\leq\delta_0
\\
|R_r Z|&\leq C_\Gamma\bigl(|\partial_r Z|
+r|\nabla_\Gamma^2 Z|+r|\nabla_\Gamma Z|\bigr)
\label{eq:tube-remainder}
\end{align}
Here $\nabla_\Gamma^2$ is the covariant surface Hessian. Indeed, $\partial_r\log J_r=-\operatorname{tr}(M_r\kappa)$ is uniformly bounded, while $M_r^2-I_\Gamma$ and $J_r^{-1}\nabla_\Gamma\cdot(J_rM_r^2)$ are bounded by $C_\Gamma r$.

We will also use
\begin{equation}\label{eq:surface-hessian-bound}
\|\nabla_\Gamma^2 g\|_\Gamma
\leq \|\Delta_\Gamma g\|_\Gamma+C_\Gamma\|\nabla_\Gamma g\|_\Gamma
\qquad g\in H^2(\Gamma)
\end{equation}
To see this, integration by parts and commutation of covariant derivatives on the closed surface give
\begin{equation}
\|\nabla_\Gamma^2 g\|_\Gamma^2
=\|\Delta_\Gamma g\|_\Gamma^2
-\int_\Gamma\operatorname{Ric}(\nabla_\Gamma g,\nabla_\Gamma g)
\end{equation}
where $\operatorname{Ric}$ is the Ricci tensor of $\Gamma$. Its boundedness gives \eqref{eq:surface-hessian-bound}, first for smooth functions and then by density.

\subsection{Hermite Lifting and Its Bounds}\label{section:hermite-lifting}
For the traces $g_0=w|_\Gamma$ and $g_1=\partial_nw|_\Gamma$ of $w\in V$, define
\begin{equation}\label{eq:hermite-polynomials}
\phi_0(t)=1-3t^2+2t^3,\qquad \phi_1(t)=-t(1-t)^2
\end{equation}
and
\begin{equation}\label{eq:hermite-lifting}
Z(y,r)=z(\Phi(y,r))=\phi_0(r/\delta)g_0(y)+\delta\phi_1(r/\delta)g_1(y)
\end{equation}
Extend $z$ by zero outside $D_\delta$. The endpoint values give
\begin{equation}\label{eq:lifting-traces}
z=w,\quad\partial_nz=\partial_nw\text{ on }\Gamma,
\qquad Z=\partial_rZ=0\text{ at }r=\delta
\end{equation}
so this extension belongs to $H^2(\Omega)$. The quantities $B_0,B_1$ below are those in \eqref{eq:boundary-quadratic-norm}, evaluated on this pair of traces.

\begin{lem}[Product-Coordinate Lifting Bounds]\label{lem:product-lifting}
With $\|\cdot\|_\times$ the norm for the product measure on $\Gamma\times(0,\delta)$,
\begin{equation}\label{eq:product-lifting-bounds}
\|Z_{rr}+\Delta_\Gamma Z\|_\times^2\leq\frac{33}{4}B_0,
\qquad \delta^{-4}\|Z\|_\times^2\leq\frac{69}{35}B_1
\end{equation}
\end{lem}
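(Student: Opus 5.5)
The plan is to prove both bounds in \eqref{eq:product-lifting-bounds} by direct computation in the variable $t=r/\delta\in(0,1)$. Since the product measure is $d\Gamma\,dr=\delta\,d\Gamma\,dt$, each bound reduces to comparing two quadratic forms. One form is built from one-dimensional Hermite moments $\int_0^1\phi_i^{(k)}\phi_j^{(l)}\,dt$. The other is built from the trace quantities appearing in $B_0$ and $B_1$. No geometry enters, because the product metric is flat in $r$ and $\Delta_\Gamma$ is $r$-independent.

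\emph{Mass bound.} With $\phi_0,\phi_1$ from \eqref{eq:hermite-polynomials}, the moments are
\[
\int_0^1\phi_0^2\,dt=\tfrac{13}{35},\qquad
\int_0^1\phi_0\phi_1\,dt=-\tfrac{11}{210},\qquad
\int_0^1\phi_1^2\,dt=\tfrac1{105}.
\]
Writing $x=\delta^{-3/2}g_0$ and $y=\delta^{-1/2}g_1$ pointwise on $\Gamma$, these give
\[
\delta^{-4}\|Z\|_\times^2=\int_\Gamma\Bigl(\tfrac{13}{35}x^2-\tfrac{11}{105}xy+\tfrac1{105}y^2\Bigr)\,d\Gamma .
\]
The target is $\tfrac{69}{35}B_1=\int_\Gamma\bigl(\tfrac{23}{35}x^2+\tfrac{23}{1050}y^2\bigr)\,d\Gamma$. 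The difference of the two forms is $\tfrac27x^2+\tfrac{11}{105}xy+\tfrac{13}{1050}y^2$. Its discriminant is $(11/105)^2-4\cdot\tfrac27\cdot\tfrac{13}{1050}<0$, so the difference is positive semidefinite. This proves the second bound.

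\emph{Laplacian bound.} Here
\[
Z_{rr}+\Delta_\Gamma Z=\delta^{-2}\phi_0''g_0+\delta^{-1}\phi_1''g_1+\phi_0\Delta_\Gamma g_0+\delta\phi_1\Delta_\Gamma g_1 .
\]
I will expand the square and integrate in $t$. The pure normal block uses $\int(\phi_0'')^2=12$, $\int\phi_0''\phi_1''=-6$ and $\int(\phi_1'')^2=4$. This yields a $2\times2$ form in $(\delta^{-3/2}g_0,\delta^{-1/2}g_1)$, to be compared with the $16(\cdots)$ term of $B_0$. The pure tangential block uses the same mass moments as above, applied to $(\delta^{1/2}\Delta_\Gamma g_0,\delta^{3/2}\Delta_\Gamma g_1)$, and is compared with the last line of \eqref{eq:bh}.

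The mixed terms are $\int_\Gamma g_i\,\Delta_\Gamma g_j$. Since $\Gamma$ is closed, I will integrate these by parts to get $-(\nabla_\Gamma g_i,\nabla_\Gamma g_j)_\Gamma$. The coefficients are moments such as $\int\phi_0''\phi_0=-\int(\phi_0')^2=-\tfrac65$, which follows because $\phi_0\phi_0'$ vanishes at both endpoints. These become gradient terms in $(\delta^{-1/2}\nabla_\Gamma g_0,\delta^{1/2}\nabla_\Gamma g_1)$, controlled by the $4(\cdots)$ line of $B_0$. Pointwise Young or Cauchy--Schwarz on these cross products is allowed, because the gradient weights in $B_0$ absorb them.

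The result is three decoupled comparisons of small quadratic forms: values, gradients and Laplacians. Each has the shape $M_k\preceq\lambda D_k$, where $D_k$ is the diagonal weight matrix from \eqref{eq:bh}. The constant $\tfrac{33}{4}$ should be the largest of the three generalized eigenvalues. The main obstacle is this bookkeeping: tracking the exact rational moments and confirming the maximum is $33/4$. The gradient block is the delicate one, since its coefficients come from cross moments with indefinite signs. If the naive block split is too lossy, I will first try carrying the gradient cross terms exactly, without Young, and compute the $2\times2$ generalized eigenvalue directly.
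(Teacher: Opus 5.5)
Your proposal is correct, and it takes a different route from the paper's proof.

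\textbf{How the paper argues.} The paper never computes value--normal cross moments. It splits $Z=Z_0+Z_1$ and applies the Green identity \eqref{eq:hermite-Gram} to each part separately, integrating by parts in $r$ as well as on $\Gamma$. The $r$-endpoint terms vanish because $\phi_0'$ and $\phi_1$ vanish at $t=0,1$. It then compares the three resulting nonnegative terms coefficientwise with $B_0$, with worst ratios $3/4$ and $15/2$, both from the $rr$ terms. Finally it joins the two parts by weighted Cauchy--Schwarz, $\|a+b\|^2\le(c_0+c_1)(P_0+P_1)$. This gives $33/4=\tfrac34+\tfrac{15}{2}$ and $69/35=\tfrac{39}{35}+\tfrac{90}{105}$.

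\textbf{How your route differs.} You expand the full square and integrate by parts only on the closed surface, so no $r$-boundary terms arise. This matters: an $r$-integration by parts of the cross terms would leave $[\phi_1'\phi_0]_0^1=1$. So compute the cross moments $\int_0^1\phi_1''\phi_0\,dt=\tfrac{11}{10}$ and $\int_0^1\phi_0''\phi_1\,dt=\tfrac1{10}$ directly. They give the gradient cross coefficient $-\tfrac{12}{5}$.

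\textbf{Correcting your expectation.} Carried out, your blocks have the following largest generalized eigenvalues:
\begin{itemize}
\item Value block: $(33+3\sqrt{111})/8\approx 8.08$, for $12x^2-12xy+4y^2$ against $16x^2+\tfrac{8}{15}y^2$.
\item Gradient block: $(13+\sqrt{319})/10\approx3.09$. This block is indefinite, which is harmless for an upper bound.
\item Laplacian block: about $0.62$.
\end{itemize}
So the binding block is the value block, not the gradient block. The maximum lies strictly below $33/4$, so you should show each block is at most $33/4$ rather than expect equality. In fact $33/4$ is exactly the trace $\tfrac34+\tfrac{15}{2}$ of your diagonally normalized value-block matrix, while you obtain its largest eigenvalue. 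Likewise, your complete mass computation gives the sharp constant $(69+\sqrt{3711})/70\approx1.86$, whereas $69/35$ is the trace of the normalized mass matrix.

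\textbf{What each approach buys.} Your route yields the optimal constants for this particular lifting, at the cost of computing cross moments. The paper's additive bound is cruder but needs only the diagonal moments in \eqref{eq:hermite-moments}. It is also modular: Appendix~\ref{section:H2-coercivity} reuses it when the normal profile is regularized mode by mode, where only the per-part constants change ($V=\tfrac34+\tfrac{R+T}{2}$).
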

\begin{proof}
Direct integration gives the moments
\begin{equation}\label{eq:hermite-moments}
\begin{array}{@{}cccc@{}}\toprule
 &\int_0^1\phi_i^2&\int_0^1(\phi_i')^2&\int_0^1(\phi_i'')^2\\\midrule
i=0&13/35&6/5&12\\
i=1&1/105&2/15&4\\\bottomrule
\end{array}
\end{equation}
Write $Z=Z_0+Z_1$ for the value and normal parts of \eqref{eq:hermite-lifting}. Integration by parts on $\Gamma$ and in $r$ gives
\begin{equation}\label{eq:hermite-Gram}
\|Z_{i,rr}+\Delta_\Gamma Z_i\|_\times^2
=\|Z_{i,rr}\|_\times^2+2\|\nabla_\Gamma Z_{i,r}\|_\times^2+\|\Delta_\Gamma Z_i\|_\times^2
\qquad i=0,1
\end{equation}
The endpoint terms vanish since $\phi_0'$ and $\phi_1$ vanish at both endpoints. By \eqref{eq:hermite-moments}, comparison with the corresponding value and normal terms of $B_0$ gives factors $3/4$ and $15/2$, respectively. Weighted Cauchy--Schwarz between the two parts gives $33/4=3/4+15/2$. The same argument for the mass terms gives
\begin{equation}
\delta^{-4}\|Z\|_\times^2\leq
\left(\frac{39}{35}+\frac{90}{105}\right)B_1=\frac{69}{35}B_1
\end{equation}
\end{proof}

\paragraph{Geometric Remainder.}
The moments \eqref{eq:hermite-moments}, $r\leq\delta$, and the surface Hessian estimate \eqref{eq:surface-hessian-bound} imply
\begin{equation}\label{eq:lifting-remainder-bound}
\|R_rZ\|_\times
\leq C_\Gamma\bigl(\|Z_r\|_\times+
\|r\nabla_\Gamma^2Z\|_\times+\|r\nabla_\Gamma Z\|_\times\bigr)
\leq C_1\delta B_0^{1/2}
\end{equation}
For example, $\|Z_r\|_\times\lesssim\delta^{-1/2}\|g_0\|_\Gamma+
\delta^{1/2}\|g_1\|_\Gamma\lesssim\delta B_0^{1/2}$; the surface terms follow from their $\delta^{3/2}$ and $\delta^{5/2}$ factors and the corresponding terms of $B_0$.
The Jacobian bound in \eqref{eq:tube-coefficient-bounds}, Lemma~\ref{lem:product-lifting}, and $69/35<33/4$ therefore yield
\begin{equation}\label{eq:lifting-bound}
\|\Delta z\|_{D_\delta}^2+\tau^{-1}\delta^{-4}\|z\|_{D_\delta}^2
\leq c_\delta^2(B_0+\tau^{-1}B_1)
\end{equation}
where an admissible constant is
\begin{equation}\label{eq:constant}
c_\delta=(1+C_0\delta)^{1/2}(\sqrt{33}/2+C_1\delta),
\qquad c_\delta^2\leq\tfrac{33}{4}+C\delta
\end{equation}
The latter bound holds on the fixed interval $0<\delta\leq\delta_0$, with the same geometry constant $C$ as in \eqref{eq:trace}.

\begin{proof}[Proof of Lemma~\ref{lem:const-bound}]
Apply the identity \eqref{eq:lifting-identity-main} and weighted Cauchy--Schwarz \eqref{eq:lifting-CS-main} to the lifting above, then use \eqref{eq:lifting-bound}. Green's formula is valid for this $H^2$ lifting by density; no fourth derivatives of $z$ or discrete inverse inequalities are needed.
\end{proof}

\subsection{Coercivity for Cubic Splines}\label{section:cubic-coercivity}
The grid consists of cubes of side length $\ell$, with $\delta=h=\sqrt d\ell$, $d=2,3$.
For compactly supported $f\in H^1(\IR)$, integration of $(|f|^2)'$ on each grid interval gives, uniformly in the translation $b$,
\begin{equation}\label{eq:lattice-trace}
\sum_{i\in\mathbb Z}|f(b+i\ell)|^2\leq\ell^{-1}\|f\|_{\IR}^2+2\|f\|_{\IR}\|f'\|_{\IR}
\end{equation}

\begin{proof}[Proof of Corollary~\ref{cor:cubic-coercivity}]
Both boundary traces belong to $H^2(\Gamma)$, so the Hermite lifting above gives the volume bound \eqref{eq:lifting-bound}. Extend its product representation to $\bar Z$ on the two-sided tube by even reflection of the value part and odd reflection of the normal part. For the two-sided product norm, the moments \eqref{eq:hermite-moments} and parity give $\delta^{-4}\|\bar Z\|_\times^2\leq MB_0$ and $\delta^{-2}\|\nabla_\times\bar Z\|_\times^2\leq GB_0$, with $M=13/280$, $G=1/2$, and $\nabla_\times=(\nabla_\Gamma,\partial_r)$. The metric changes these by at most a factor $1+C\delta$. The lattice trace estimate \eqref{eq:lattice-trace}, applied in each coordinate direction, therefore gives
\begin{equation}\label{eq:cubic-face-bound}
\sum_{F\in\mcF_{h,\delta}}h^{-3}\|z\|_{F\cap\Omega}^2
\leq C_3(\delta)B_0,
\qquad C_3(\delta)\leq\sqrt3(3M+2\sqrt{MG})(1+C\delta)<\tfrac45(1+C\delta)
\end{equation}
For $\gamma_3\geq1$ and sufficiently small $h$, the two-sided tube lies inside the tubular neighborhood and $c_\delta^2+C_3(\delta)/\gamma_3\leq12$. Since $\ldb\Delta v\rdb=0$ and $\ldb\partial_{n_F}\Delta v\rdb=\ldb\partial_{n_F}^3v\rdb$, cellwise Green's formula and weighted Cauchy--Schwarz yield
\begin{equation}\label{eq:cubic-consistency}
|Q(v,w)|\leq\sqrt{12}\bigl(\|v\|_{a_h}^2+j_h(v,v)\bigr)^{1/2}\|w\|_{b_h}
\end{equation}
Young's inequality as in Lemma~\ref{lem:continuity-coercivity} proves the claim.
\end{proof}

\section{Estimates for Quadratic and Cubic Splines}\label{section:H2-coercivity}
Coercivity for cubic splines is established in Corollary~\ref{cor:cubic-coercivity}. Here we prove the quadratic coercivity estimate of Lemma~\ref{lem:quadratic-coercivity} and the error estimates of Theorem~\ref{thm:face-error}. As in the analysis for $H^4$-conforming approximation spaces, the stability argument does not invoke discrete inverse inequalities. The grid consists of cubes of side length $\ell$, with $\delta=h=\sqrt d\ell$, $d=2,3$. We use the two-sided normal coordinates $\Gamma\times(-\delta,\delta)$ and the geometry parameters \eqref{eq:H2-geometric-condition}.

\subsection{Lifting the Normal Trace}
For $C^1$ quadratics, $g_0=w|_\Gamma\in H^2(\Gamma)$ and $g_1=\partial_nw|_\Gamma\in H^1(\Gamma)$. Keep the value lifting of Appendix~\ref{section:cont-consistency}, and regularize only the normal part. In an orthonormal eigenbasis $e_m$ of $-\Delta_\Gamma$ with eigenvalues $\lambda_m$, write $g_0=\sum_ma_me_m$, $g_1=\sum_mb_me_m$, and set
\begin{equation}\label{eq:H2-mode-width}
q_m=\delta^2\lambda_m,\qquad \mu_m=\sqrt{1+q_m},\qquad d_m=\delta/\mu_m
\end{equation}
Extend the profiles \eqref{eq:hermite-polynomials} by zero for arguments greater than one and define
\begin{equation}\label{eq:H2-regularized-lifting}
\bar Z(y,r)=\phi_0(|r|/\delta)g_0(y)
+\sum_m d_m\operatorname{sgn}(r)\phi_1(|r|/d_m)b_me_m(y)
\end{equation}
Let $\bar z=\bar Z\circ\Phi^{-1}$, extended by zero outside the tube, $z=\bar z|_\Omega$, and $Z=\bar Z|_{r>0}$. Put
\begin{equation}\label{eq:H2-mode-boundary-weight}
\mathcal P_0=\sum_m\delta^{-3}(16+4q_m+q_m^2)|a_m|^2,
\qquad \mathcal P_1=\sum_m\delta^{-1}\frac{16+4q_m}{30}|b_m|^2
\end{equation}
Then $\mathcal B=\mathcal P_0+\mathcal P_1\leq B_0(w)$. The broken $\Delta_\Gamma g_1$ penalty is unused in this bound and is retained in the method.

Write $\|\cdot\|_\times$ for the norm on $\Gamma\times(-\delta,\delta)$, $\nabla_\times=(\nabla_\Gamma,\partial_r)$, and $T_2^2=2\|\nabla_\Gamma\bar Z_r\|_\times^2+\|\Delta_\Gamma\bar Z\|_\times^2$. Parity eliminates cross terms between the two parts. The moments \eqref{eq:hermite-moments} give Table~\ref{tab:product-constants}.

\begin{table}[htbp]
\centering\small
\caption{Product-norm bounds for the value and normal parts. The columns multiply $\mathcal P_0$ and $\mathcal P_1$, respectively.}\label{tab:product-constants}
\begin{tabular}{@{}lcc@{}}\toprule
Quantity & Value Part & Normal Part\\\midrule
$\delta^{-4}\|\bar Z\|_\times^2$ & $13/280$ & $1/28$\\
$\delta^{-2}\|\nabla_\times\bar Z\|_\times^2$ & $13/70$ & $1/2$\\
$\|\bar Z_{rr}\|_\times^2$ & $3/2$ & $10\sqrt3$\\
$T_2^2$ & $6/5$ & $29/(14\sqrt3)$\\
$\|\Delta_\Gamma\bar Z\|_\times^2$ & $26/35$ & $1/(14\sqrt3)$\\\bottomrule
\end{tabular}
\end{table}

For the normal column use $q\leq\mu^2$ and $\mu/(q+4)\leq1/(2\sqrt3)$; in particular, the radial ratio is $60\mu/(q+4)\leq10\sqrt3$. For the value column compare coefficients of $16+4q+q^2$. The five rows are bounded by $M\mathcal B,G\mathcal B,R\mathcal B,T\mathcal B,W\mathcal B$, where
\begin{equation}\label{eq:H2-product-constants}
M=\frac{13}{280},\quad G=\frac12,\quad R=10\sqrt3,\quad T=\frac65,\quad W=\frac34
\end{equation}
On the positive half-tube, \eqref{eq:hermite-Gram} and coefficient comparison bound $\|Z_{rr}+\Delta_\Gamma Z\|_+^2$ by $\tfrac34\mathcal P_0$ for the value part and $\tfrac12(R+T)\mathcal P_1$ for the normal part. Weighted Cauchy--Schwarz between the parts and the mass moments therefore give
\begin{equation}\label{eq:H2-one-sided-bounds}
\|Z_{rr}+\Delta_\Gamma Z\|_+^2\leq V\mathcal B,
\quad V=\tfrac34+\tfrac{R+T}{2}\leq\tfrac{81}{8},
\qquad \delta^{-4}\|Z\|_+^2\leq\tfrac{69}{35}B_1
\end{equation}
The one-sided cross terms are included in $V$. Bochner's identity adds at most $\theta^2G\mathcal B$ to the tangential Hessian bound. These estimates prove convergence in $H^2$, the prescribed boundary traces, and support of $z$ in $\overline{D_\delta}$.

\subsection{Physical Norms}\label{section:physical-norms}
For the quadratic lifting, assume $\theta,\sigma\leq1/10$ and put $a=10/9$, $J=23/20$. The Jacobian is at most $J$, and $J_r\nu_i^2\leq11/9$ for $\nu_i=(1-r\kappa_i)^{-1}$. The $rr$ Hessian component is unchanged; mixed and tangential components have multipliers at most $a$ and $a^2$. The second-derivative Laplacian remainder acts only on the tangential Hessian, with coefficient at most $a^2-1$. Differentiating the normal coordinates bounds the first-derivative remainders in the Hessian and Laplacian by $1/(4\delta)$: their squared operator norms, multiplied by $\delta^2$, are at most $(a^2+2a^4+a^6)/100<1/16$ and $(a^2+a^6)/50<1/16$, respectively. The Hessian remainder has no $rr$ component. Thus, using the Bochner correction $\theta^2G\leq1/200$,
\begin{align}
\delta^{-2}\|\nabla\bar z\|_{\IR^d}^2
&\leq\tfrac{11}{9}GB_0=\tfrac{11}{18}B_0\label{eq:H2-physical-G}\\
\|D^2\bar z\|_{\IR^d}^2
&\leq J\left[R+\left(a^2\sqrt{T+\tfrac1{200}}+\tfrac{\sqrt G}{4}\right)^2\right]B_0
\leq23B_0\label{eq:H2-physical-H}\\
\|\Delta z\|_{D_\delta}^2
&\leq J\left[\sqrt V+(a^2-1)\sqrt{W+\tfrac1{200}}+\tfrac{\sqrt G}{4}\right]^2B_0
\leq\tfrac{73}{5}B_0\label{eq:H2-physical-L}
\end{align}
The mass bound \eqref{eq:H2-one-sided-bounds} and $J(69/35)<73/5$ give
\begin{equation}\label{eq:H2-physical-volume}
\|\Delta z\|_{D_\delta}^2+\tau^{-1}\delta^{-4}\|z\|_{D_\delta}^2
\leq\tfrac{73}{5}\|w\|_{b_h}^2
\end{equation}

\subsection{Face Traces and Coercivity}\label{section:quadratic-coercivity}
Apply the lattice trace estimate \eqref{eq:lattice-trace} to the coordinate derivatives of $\bar z$. Restriction to physical faces decreases the sums, so $h=\delta=\sqrt d\ell$, $d\leq3$, and \eqref{eq:H2-physical-G}--\eqref{eq:H2-physical-H} give
\begin{equation}\label{eq:H2-cartesian-faces}
\sum_{F\in\mcF_{h,\delta}}h^{-1}\|\partial_{n_F}z\|_{F\cap\Omega}^2
\leq\left(\tfrac{11\sqrt3}{18}+2\sqrt{\tfrac{253}{18}}\right)B_0
\leq\tfrac{43}{5}B_0
\end{equation}

\begin{proof}[Proof of Lemma~\ref{lem:quadratic-coercivity}]
For $C^1$ quadratics, tangential differentiation of the matching traces gives $\ldb\Delta v\rdb=\ldb\partial_{n_F}^2v\rdb$ and $\ldb\partial_{n_F}\Delta v\rdb=0$. Cellwise Green's formula, with $n_F$ outward from $T^+$, yields
\begin{equation}\label{eq:H2-Green}
Q(v,w)=(\Delta_h^2v,z)_{D_\delta}-(\Delta v,\Delta z)_{D_\delta}
+\sum_{F\in\mcF_{h,\delta}}(\ldb\partial_{n_F}^2v\rdb,\partial_{n_F}z)_{F\cap\Omega}
\end{equation}
Density extends the identity to the $H^2$ lifting, using \eqref{eq:lattice-trace}. By \eqref{eq:H2-physical-volume}, \eqref{eq:H2-cartesian-faces}, $\gamma_2\geq1$, and weighted Cauchy--Schwarz,
\begin{equation}\label{eq:H2-Q-bound}
|Q(v,w)|\leq\sqrt{\tfrac{116}{5}}\bigl(\|v\|_{a_h}^2+j_h(v,v)\bigr)^{1/2}\|w\|_{b_h}
\end{equation}
Young's inequality with parameter $4/5$ leaves coefficients $1/5$ and $\beta-29\geq1$ for $\|v\|_{a_h}^2+j_h(v,v)$ and $\|v\|_{b_h}^2$, respectively, proving the claim.
\end{proof}

\subsection{Error Estimates}\label{section:face-error-proof}
\begin{proof}[Proof of Theorem~\ref{thm:face-error}]
Use the consistency bounds \eqref{eq:cubic-consistency} and \eqref{eq:H2-Q-bound} for the respective degrees.
Extending the $H^4$ summand to the active mesh, these bounds give continuity and coercivity of $A_{h,j}$ on $H^4(\Omega)+V_h$. The $H^4$ summands have no jumps in derivatives through order three, so the same bounds apply. Consistency and Galerkin orthogonality therefore give
\begin{equation}\label{eq:H2-cea}
\tn u-u_h\tn_{h,j}\lesssim\inf_{v_h\in V_h}\tn u-v_h\tn_{h,j}
\end{equation}
Use the extension and spline quasi-interpolant from Section~\ref{section:interpolation}, now with $p=2,3$, and put $\widetilde\eta=Ev-\pi_hv$. The usual local estimates hold with approximation order $p+1$. For $p=2$, the fourth-derivative bound is $\|D^4\widetilde\eta\|_T\lesssim h^{-1}\|Ev\|_{H^4(\mcN_h(T))}$ for $h\leq1$, by local quadratic reproduction and scaling on full cells. In addition to the boundary traces in \eqref{eq:interpolation-boundary}, full-face trace estimates give
\begin{equation}\label{eq:H2-interpolation-faces}
j_h(v-\pi_hv,v-\pi_hv)
\lesssim\sum_{T\in\mcT_h(\Omega)}\sum_{m=p}^{p+1}
h^{2m-4}\|D^m\widetilde\eta\|_T^2
\end{equation}
The factor $h^{2p-3}$ in $j_h$ gives the weights $h^{2p-4}$ and $h^{2p-2}$ for derivatives of orders $p$ and $p+1$. Bounded overlap and extension stability thus imply
\begin{equation}\label{eq:H2-interpolation-energy}
\tn v-\pi_hv\tn_{h,j}\lesssim h^{p-1}\|v\|_{H^4(\Omega)},
\qquad v\in H^4(\Omega)
\end{equation}
This proves \eqref{eq:H2-error-energy}.

For $e=u-u_h$, let $z$ solve \eqref{eq:dual-problem}. Since $z\in H^4(\Omega)\cap H^2_0(\Omega)$, $j_h(e,z)=0$ and the identity \eqref{eq:dual-consistency} holds with the elementwise bi-Laplacian. As in the proof of Theorem~\ref{thm:error-estimates},
\begin{align}
\|e\|_\Omega^2
&=A_{h,j}(e,z-\pi_hz)
-\tau h^4(\Delta_h^2e,e)_{D_\delta}
\label{eq:H2-dual-identity}\\
&\lesssim(h^{p-1}+h^2)\tn e\tn_{h,j}\|e\|_\Omega
\label{eq:H2-dual-bound}
\end{align}
using \eqref{eq:H2-interpolation-energy}, \eqref{eq:dual-regularity}, and $\delta=h$. Division by $\|e\|_\Omega$, unless $e=0$, proves \eqref{eq:H2-error-L2}. For quadratics the dual approximation supplies one power of $h$; for cubics it supplies two.
\end{proof}

\section{Numerical Verification}\label{section:numerical-verification}
The following settings and checks document the experiments in Section~\ref{section:numerics}. Computations use double precision in MATLAB R2025b. The numerical values below are taken from the saved outputs of those experiments and their verification runs.

\subsection{Geometry, Quadrature, and Solvers}\label{section:verification-settings}
The physical strip is constructed from the exact ellipse parametrization $\gamma(t)=(a\cos t,b\sin t)$ as $x(t,r)=\gamma(t)-rn(t)$, $0<r<\delta$, with Jacobian $|\gamma'(t)|(1-r\kappa(t))$. In each cell, the normal segments are clipped to the cell boundaries. The angular intervals are split at crossings of the boundary and its inner parallel curve with cell faces and at changes of the clipping face through a cell corner. Gauss quadrature is then applied on each smooth integration piece. The standard rules use 12 points per direction for assembly and 18 for error evaluation. The refinement checks use 18 assembly points; the small-penalty check also increases the error rule to 24 points. All normal, curvature, and surface-derivative terms are evaluated from the exact geometry.

For each retained matrix, let $D_A=\operatorname{diag}(|A_{ii}|^{-1/2})$, $B=D_AAD_A$, and $r=D_A\mathbf b$. We solve $B\mathbf z=r$ by MATLAB's sparse direct solver and recover the coefficients as $D_A\mathbf z$. Thus there is no iterative linear-solver stopping tolerance. Across the 232 LS Nitsche systems in the convergence, cut-position, fixed-method, parameter, and quadratic studies, the maximum scaled relative residual $\|B\mathbf z-r\|_2/\|r\|_2$ is $8.03\cdot10^{-13}$ (rounded upward). The condition estimates reported below are \texttt{condest} estimates of $\kappa_1(B)$.

For $D_G=\operatorname{diag}(G_{ii}^{-1/2})$, put $C=D_GAD_G$ and $H=D_GGD_G$. The generalized eigenvalue bisection starts with $[-1,1]$, expands the bracket as necessary, and stops when its width is at most $10^{-12}\max(1,|\lambda_-|,|\lambda_+|)$. Positive definiteness is tested by Cholesky factorization after symmetric diagonal scaling in the energy metric. Independent checks use a dense symmetric eigenproblem after Cholesky whitening, and, for the fine fixed-method standard matrix, shift-invert iteration with tolerance $10^{-11}$ and at most 2000 iterations. For the scaled pair $(C,H)$, the reported eigenpair residual is
\begin{equation}\label{eq:verification-eigen-residual}
\frac{\|C\mathbf v-\lambda H\mathbf v\|_2}
{\|C\mathbf v\|_2+|\lambda|\|H\mathbf v\|_2}
\end{equation}
The criterion $\widehat\kappa_1(B)\epsilon_{\rm mach}>10^{-4}$ flags unresolved diagnostics; a small linear-solve residual alone is not used to certify an eigenvalue.

Basis functions are sorted by increasing $G_{ii}=\tn\varphi_i\tn_h^2$ (or the augmented norm for quadratics), and the largest initial group satisfying \eqref{eq:basis-removal-budget} is removed. The budget constant is $0.01$, with $p=4$ or $p=2$ as appropriate; it is zero in the fixed-method and parameter studies. Standard Nitsche and LS Nitsche use the same retained space. Table~\ref{tab:verification-spectra} includes the removal counts for the quartic convergence study.

\subsection{Quadrature and Spectral Checks}\label{section:verification-checks}
Table~\ref{tab:verification-checks} reports quadrature-refinement and independent spectral comparisons. The fixed-method checks use $\beta=30$, $\tau=0.01$, and $\delta=\sqrt2/4$, with the nested phases of Section~\ref{section:fixed-method} and standard penalty multiplier $1000$. The small-$\tau$ and small-$\beta$ checks use $\delta=h$ and phase $(0.23,0.37)$. Removal is disabled in these checks. Relative quadrature differences use the refined result in the denominator, except for the $\beta=3$ check, which uses the original result. Matrix differences use the Frobenius norm; eigenvalue differences are absolute.

\begin{table}[htbp]
\centering\footnotesize
\caption{Verification checks for quartic splines. Quadrature refinements increase assembly from 12 to 18 points per direction; for $\beta=3$, the error rule also increases from 18 to 24. The fixed-method dense comparison reports the larger discrepancy of the LS and standard eigenvalues. Maxima are taken over the listed mesh sizes.}\label{tab:verification-checks}
\setlength{\tabcolsep}{3pt}
\renewcommand{\arraystretch}{1.15}
\begin{tabular}{@{}p{0.29\textwidth}p{0.47\textwidth}r@{}}\toprule
Case & Quantity & Value\\\midrule
Fixed method, $N=16$ & Relative matrix change, $12\to18$ points & $1.46\cdot10^{-15}$\\
Fixed method, $N=16$ & Relative energy-error change & $1.01\cdot10^{-13}$\\
$\beta=30$, $\tau=10^{-6}$, $N=32$ & Relative matrix change, $12\to18$ points & $1.93\cdot10^{-15}$\\
$\beta=30$, $\tau=10^{-6}$, $N=32$ & Relative $L^2$-error change & $1.81\cdot10^{-7}$\\
$\beta=3$, $\tau=0.01$, $N\in\{8,32\}$ & Maximum relative $L^2$-error change & $1.15\cdot10^{-8}$\\
$\beta=3$, $\tau=0.01$, $N\in\{8,32\}$ & Maximum eigenvalue change & $2.78\cdot10^{-17}$\\
\midrule
Fixed method, $N=16$ & Bisection/dense eigenvalue difference & $9.31\cdot10^{-13}$\\
Fixed standard, $N=64$ & Bisection/shift-invert difference & $2.15\cdot10^{-13}$\\
Fixed standard, $N=64$ & Eigenpair residual \eqref{eq:verification-eigen-residual} & $1.25\cdot10^{-12}$\\
$\beta=3$, $\tau=0.01$, $N=8$ & Bisection/dense eigenvalue difference & $5.83\cdot10^{-14}$\\
$\beta=3$, $\tau=0.01$, $N=8$ & Eigenpair residual \eqref{eq:verification-eigen-residual} & $3.87\cdot10^{-14}$\\
\bottomrule\end{tabular}
\end{table}
\FloatBarrier

\subsection{Representative Stability Diagnostics}\label{section:verification-spectra}
Table~\ref{tab:verification-spectra} gives numerical values underlying the quartic convergence and fixed-method studies. Each LS/standard pair uses the same energy metric $G$. In the fixed-method study, the LS Nitsche generalized eigenvalue remains close to $0.90$ while the scaled condition estimate grows substantially. These quantities therefore measure distinct properties.

\begin{table}[htbp]
\centering\footnotesize
\caption{Representative quartic-spline diagnostics with $\beta=30$ and $\tau=0.01$. Each $\widehat\kappa_1$ estimates the condition number after symmetric diagonal scaling. In (a), the standard penalties match those of LS Nitsche and both methods use the same removed basis functions. In (b), removal is disabled and the standard penalties have the fixed calibration factor $1000$.}\label{tab:verification-spectra}
\setlength{\tabcolsep}{5pt}
\renewcommand{\arraystretch}{1.12}
\begin{tabular}{@{}rrrrrr@{}}\toprule
$N$ & Removed & $\lambda_{\min}^{\rm LS}$ & $\widehat\kappa_1^{\rm LS}$ & $\lambda_{\min}^{\rm Std}$ & $\widehat\kappa_1^{\rm Std}$\\\midrule
\multicolumn{6}{@{}l}{(a) Convergence study: $\delta=h$, phase $(0.23,0.37)$}\\[2pt]
8 & 8 & $0.899930$ & $2.07\cdot10^{3}$ & $-1.04\cdot10^{-2}$ & $1.05\cdot10^{6}$\\
16 & 9 & $0.894831$ & $1.24\cdot10^{3}$ & $-1.03\cdot10^{-2}$ & $1.62\cdot10^{7}$\\
32 & 5 & $0.892291$ & $1.76\cdot10^{4}$ & $-1.03\cdot10^{-2}$ & $8.48\cdot10^{6}$\\
64 & 13 & $0.890980$ & $2.76\cdot10^{5}$ & $-1.05\cdot10^{-2}$ & $1.10\cdot10^{8}$\\
\midrule
\multicolumn{6}{@{}l}{(b) Fixed-method study: $\delta=\sqrt2/4$, nested phases}\\[2pt]
8 & 0 & $0.900101$ & $1.58\cdot10^{3}$ & $1.42\cdot10^{-6}$ & $5.50\cdot10^{6}$\\
16 & 0 & $0.898106$ & $3.43\cdot10^{3}$ & $-6.13\cdot10^{-7}$ & $2.33\cdot10^{8}$\\
32 & 0 & $0.896314$ & $6.98\cdot10^{5}$ & $-7.54\cdot10^{-7}$ & $6.33\cdot10^{7}$\\
64 & 0 & $0.894657$ & $1.84\cdot10^{8}$ & $-8.12\cdot10^{-7}$ & $2.57\cdot10^{9}$\\
\bottomrule\end{tabular}
\end{table}

\FloatBarrier

\clearpage
\normalsize

\bigskip
\paragraph{Acknowledgments.}
This research was supported in part by the Swedish Research Council grants
Nos.\ 2021-04925, 2025-05562, and 2026-06012; the Swedish Research Programme Essence.

\Needspace{9\baselineskip}
\paragraph{Declaration of AI Use.}
The initial manuscript and code were developed without AI tools. The authors subsequently used OpenAI GPT-6 Astra to assist with manuscript revision and running the numerical experiments, and Anthropic Fable 5.1 to review the final draft. The authors reviewed the AI-assisted work, made revisions where necessary, and retain full responsibility for the article's content.

\bigskip
\bigskip
{ \footnotesize
\noindent
{\bf Authors' Addresses:}

\smallskip
\noindent
Mats G. Larson,  \quad \hfill \addressumushort\\
{\tt mats.larson@umu.se}

\smallskip
\noindent
Karl Larsson, \quad \hfill \addressumushort\\
{\tt karl.larsson@umu.se}

} 
\end{document}